\tikzset{spanmap/.style={
            decoration={markings,
            mark= at position 0.5 with {
                  \node[transform shape] (tempnode) {$|$};
                  }
              },
              postaction={decorate}
}
}
\tikzset{doublespanmap/.style={
            decoration={markings,
            mark= at position 0.5 with {
                  \node[transform shape] (tempnode) {$||$};
                  }
              },
              postaction={decorate}
}
}
\newcommand\Base{{\mathcal T}}
\newcommand\Der{{\mathcal D}}
\newcommand\bigcat[1]{\mathrm{#1}}
\newcommand\Set{\bigcat{Set}}
\newcommand\FinSet{\bigcat{FinSet}}
\newcommand\Span[1]{\bigcat{Span}(#1)}
\newcommand\SubSet{\bigcat{Subset}}
\newcommand\Cat{\bigcat{Cat}}
\newcommand\Dist{\bigcat{Dist}}
\newcommand\TranSet{\mathit{Tran}}
\newcommand\TranFunc{\delta}
\newcommand\Monoid[1][]{\mathcal{B}_{#1}}
\newcommand\set[1]{\{\,{#1}\,\}}
\newcommand\refs{\mathbin{\sqsubset}}
\newcommand\Words[1]{\mathcal{W}[#1]}
\newcommand\Wordsonly{\mathcal{W}}
\newcommand\Derivs[1]{\mathcal{D}[#1]}
\newcommand\defin[1]{\textbf{#1}}
\newcommand\Contour[1]{\mathcal{C}[#1]}
\newcommand\UnivGrammar[1]{\mathsf{Univ}_{#1}}
\newcommand\Dyck[1]{\mathsf{Dyck}_{#1}}
\newcommand{\Ccategory}{\mathcal{C}}
\newcommand{\Dcategory}{\mathcal{D}}
\newcommand{\Ooperad}{\mathcal{O}}
\newcommand{\Poperad}{\mathcal{P}}
\newcommand{\Qoperad}{\mathcal{Q}}
\newcommand{\Qcategory}{\mathcal{Q}}
\newcommand{\Sspecies}{\mathcal{S}}
\newcommand{\Rspecies}{\mathcal{R}}
\newcommand\FreeOperad[1][]{\mathop{\mathsf{Free}_{#1}}}
\newcommand\ForgetOperad[1]{{#1}}
\newcommand\ForgetOperadFunctor{\mathop{\mathsf{Forget}}}
\newcommand\FactorOperad[2][\@nil]{\def\tmp{#1}\ifx\tmp\@nnil\mathsf{Fact}_{\,#2}\else\mathsf{Fact}_{\,#1,#2}\fi}
\newcommand\interval[1]{\mathrm{I}{#1}}
\newcommand\TerminalCat{\mathbf{1}}
\newcommand\TerminalSpecies{\mathbb{N}}
\newcommand\const[1]{{#1}_0}
\newcommand\elts{\mathop{\mathrm{el}}}
\newcommand\Nonterminals{N}
\newcommand\Productions{P}
\newcommand\Sentence{S}
\newcommand\Lang[1]{L_{#1}}
\newcommand\source{\mathsf{source}}
\newcommand\target{\mathsf{target}}
\newcommand{\imagefunctor}{\mathop{\mathrm{im}}}
\newcommand{\id}[1][]{id_{#1}}
\newcommand{\Species}[1]{\bigcat{Species}_{#1}}
\newcommand{\Operads}[1]{\bigcat{Operad}_{#1}}
\newcommand{\Arrowcatof}[1]{{#1}^{\to}}
\newcommand\op{{\mathrm{op}}}
\newcommand\node[1][]{\mathbin{\mathrm{node}}_{#1}}
\newcommand{\speciesunit}[1]{\mathcal{I}_{#1}}
\newcommand{\cod}{\mathrm{cod}}
\newcommand{\dom}{\mathrm{dom}}
\newcommand{\spanmap}{{\,\longrightarrow\hspace{-1.15em}|\hspace{.95em}}}
\newcommand\bin{{\mathsf{bin}}}
\newcommand\nodes{{\mathsf{nodes}}}
\newcommand\colors{{\mathsf{colors}}}
\newcommand\tagU{u}
\newcommand\tagD{d}
\newif\ifwithAppendices
\begin{document}
\begin{frontmatter}
  \title{Parsing as a Lifting Problem and the
  \\
  Chomsky-Sch{\"u}tzenberger Representation Theorem}

  \author{Paul-Andr\'e Melli\`es\thanksref{paulthanks}}
  \address{IRIF, Université Paris Cité, CNRS, Inria, Paris, France} 
  \thanks[paulthanks]{Email:  \href{mailto:paul-andre.mellies@cnrs.fr}{\texttt{\normalshape paul-andre.mellies@cnrs.fr}}. Partially supported by ANR ReciProg (ANR-21-CE48-0019).}
  
  \author{Noam Zeilberger\thanksref{noamthanks}}
  \address{LIX, École Polytechnique, Palaiseau, France}
  \thanks[noamthanks]{Email: \href{mailto:noam.zeilberger@lix.polytechnique.fr} {\texttt{\normalshape noam.zeilberger@lix.polytechnique.fr}}. Partially supported by ANR LambdaComb (ANR-21-CE48-0017).}

  \begin{abstract}
    Building on our work on type refinement systems, we continue developing the thesis that many kinds of deductive systems may be usefully modelled as functors and derivability as a lifting problem, focusing in this work on derivability in context-free grammars.
We begin by explaining how derivations in any context-free grammar may be naturally encoded by a functor of operads from a freely generated operad into a certain ``operad of spliced words''.
This motivates the introduction of a more general notion of context-free grammar over any category, defined as a finite species $\Sspecies$ equipped with a color denoting the start symbol and a functor of operads
$p : \FreeOperad{\Sspecies} \to \Words{\Ccategory}$ into the \emph{operad of spliced arrows} in $\Ccategory$, generating a context-free language of arrows.
We show that many standard properties of context-free grammars can be formulated within this framework, thereby admitting simpler analysis, and that usual closure properties of context-free languages generalize to context-free languages of arrows.
One advantage of considering parsing as a lifting problem is that it enables a dual fibrational perspective on the functor $p$ via the notion of \emph{displayed operad}, corresponding to a lax functor of operads $\Words{\Ccategory} \to \Span{\Set}$.
We show that displayed free operads admit an explicit inductive definition, using this to give a reconstruction of Leermakers' generalization of the CYK parsing algorithm.
We then turn to the Chomsky-Schützenberger Representation Theorem.
We start by explaining that a non-deterministic finite state automaton over words, or more generally over arrows of a category, can be seen as a category $\Qcategory$ equipped with a pair of objects denoting initial and accepting states and a functor of categories $\Qcategory \to \Ccategory$ satisfying the unique lifting of factorizations (ULF) property and the finite fiber property, recognizing a regular language of arrows.
Then, we explain how to extend this notion of automaton to functors of operads, which generalize tree automata, allowing us to lift an automaton over a category to an automaton over its operad of spliced arrows.
We show that every context-free grammar over a category can be pulled back along a non-deterministic finite state automaton over the same category, and hence that context-free languages are closed under intersection with regular languages.
The last and important ingredient is the identification of a left adjoint $\Contour{-}:\Operads{}\to\Cat$ to the operad of spliced arrows functor $\Words{-}:\Cat\to\Operads{}$.
This construction builds the contour category~$\Contour{\Ooperad}$ of any operad~$\Ooperad$, whose arrows have a geometric interpretation as ``oriented contours'' of operations.
A direct consequence of the contour / splicing adjunction is that every pointed finite species induces a universal context-free grammar, generating a language of tree contour words.
Finally, we prove a generalization of the Chomsky-Schützenberger Representation Theorem, establishing that any context-free language of arrows over a category~$\Ccategory$ is the functorial image of the intersection of a $\Ccategory$-chromatic tree contour language and a regular language.

  \end{abstract}
  \begin{keyword}
    context-free languages, parsing, finite state automata, category theory, operads, representation theorem
  \end{keyword}
\end{frontmatter}

\section{Introduction}
\label{section/introduction}

In ``Functors are Type Refinement Systems'' \cite{mz15popl}, we argued for the idea that rather than being modelled merely as categories, type systems
should be modelled as functors ${p:\Der\to\Base}$ from a category~$\Der$ whose morphisms are typing derivations
to a category~$\Base$ whose morphisms are the terms corresponding to the underlying \emph{subjects} of those derivations.
One advantage of this fibrational point of view is that the notion of typing judgment receives a simple mathematical status, as a triple~$(R,f,S)$
consisting of two objects~$R,S$ in $\Der$ and a morphism $f$ in $\Base$ such that $p(R)=\dom(f)$ and $p(S)=\cod(f)$.
The question of finding a typing derivation for a typing judgment~$(R,f,S)$ then reduces 
to the lifting problem of finding a morphism $\alpha:R\to S$ such that $p(\alpha)=f$.
We developed this perspective in a series of papers \cite{mz15popl,mz18isbell,mz16bifib}, and believe that it may be usefully applied to a large variety of deductive systems, beyond type systems in the traditional sense.
In this work, we focus on derivability in context-free grammars, a classic topic in formal language theory with wide applications in computer science.

To set the stage and motivate the overall approach, let us begin by quickly explaining how context-free grammars naturally give rise to certain functors of colored operads $\Der\to\Base$.
We will assume that the reader is already familiar with context-free grammars and languages \cite{SippuSoisoiParsingTheory1} as well as with operads or multicategories \cite[Ch.~2]{LeinsterHOHC}. 
Note that ``multicategory'' and ``colored operad'' are two different names in the literature for the same concept, and in this paper we will often just use the word operad, it being implicit that operads always carry a (potentially trivial) set of colors.
We write $f \circ (g_1,\dots,g_n)$ for parallel composition of operations in an operad, and $f \circ_i g$ for the partial composition of $g$ into $f$ after the first $i$ inputs.

Classically, a context-free grammar is defined as a tuple $G = (\Sigma,\Nonterminals,\Sentence,\Productions)$ consisting of
a finite set $\Sigma$ of terminal symbols,
a finite set $\Nonterminals$ of non-terminal symbols,
a distinguished non-terminal $\Sentence \in \Nonterminals$ called the start symbol,
and a finite set $\Productions$ of production rules of the form $R\to\sigma$ where $R\in\Nonterminals$
and $\sigma\in(\Nonterminals\cup\Sigma)^{\ast}$ is a string of terminal or non-terminal symbols.
Observe that any sequence $\sigma$ on the right-hand side of a production can be factored as $\sigma=w_0 R_1 w_1 \dots R_n w_n$ where
$w_0,\dots,w_n$ are words of terminals and $R_1,\dots,R_n$ are non-terminal symbols.
We will use this simple observation in order to capture derivations in context-free grammars by functors of operads $\Der\to\Base$ from an operad $\Der$ whose colors are non-terminals to a certain monochromatic operad $\Base=\Words{\Sigma}$ that we like to call the \defin{operad of spliced words} in $\Sigma$.
The $n$-ary operations of $\Words{\Sigma}$ consist of sequences $w_0{-}w_1{-}\dots{-}w_n$ of $n+1$ words in $\Sigma^*$ separated by $n$ \emph{gaps} notated with the $-$ symbol, with composition defined simply by ``splicing into the gaps'' and interpreting juxtaposition by concatenation in $\Sigma^*$.
For example, the parallel composition of the spliced word $a{-}b{-}c$ with the pair of spliced words $d{-}e{-}f$ and $\epsilon{-}a$ is defined as
$(a{-}b{-}c) \circ (d{-}e{-}f,\epsilon{-}a) = ad{-}e{-}fb{-}ac$.
The identity operation is given by the spliced word $\epsilon{-}\epsilon$, and it is routine to check that the operad axioms are satisfied. 

Now, to any context-free grammar $G$ we can associate a free operad $\Derivs{G}$ that we call the (colored) \defin{operad of derivations} in $G$.
Its colors are the non-terminal symbols $R \in \Nonterminals$ of the grammar, while its operations are freely generated by the production rules, with each rule $r \in \Productions$ of the form $R \to w_0R_1w_1\dots R_nw_n$ giving rise to an $n$-ary operation $r : R_1,\dots,R_n \to R$.
These basic operations freely generate the operad $\Derivs{G}$ whose general operations $R_1,\dots,R_n \to R$ can be regarded as (potentially incomplete) parse trees with root label $R$ and free leaves labelled $R_1,\dots,R_n$, and with each node labelled by a production rule of $G$.
Moreover, this free operad comes equipped with an evident forgetful functor $\Derivs{G} \to \Words{\Sigma}$ that sends every non-terminal symbol $R$ to the unique color of $\Words{\Sigma}$, and every generating operation $r : R_1,\dots,R_n \to R$ as above to the spliced word $w_0{-}\dots{-}w_n$, extending to parse trees homomorphically.
See Fig.~\ref{fig:example-cfg} for an illustration.

In the rest of the paper, we will see how this point of view may be generalized to define a notion of context-free language of arrows 
between two objects~$A$ and~$B$ in any category~$\Ccategory$, by first introducing a certain operad $\Words{\Ccategory}$ of \emph{sliced arrows in $\Ccategory$}.
We will see that many standard concepts and properties of context-free grammars and languages can be formulated within this framework, thereby admitting simpler analysis, and that parsing may indeed be profitably considered from a fibrational perspective, as a lifting problem along a functor from a freely generated operad.
We will also develop a notion of non-deterministic finite state automaton and regular language of arrows in a category, and show that context-free languages are closed
under intersection with regular languages.
Finally, we will establish a categorical generalization of the Chomsky-Sch{\"u}tzenberger representation theorem,
relying on a fundamental adjunction between categories and operads that we call the contour / splicing adjunction.

\begin{figure}
  \includegraphics[width=\textwidth]{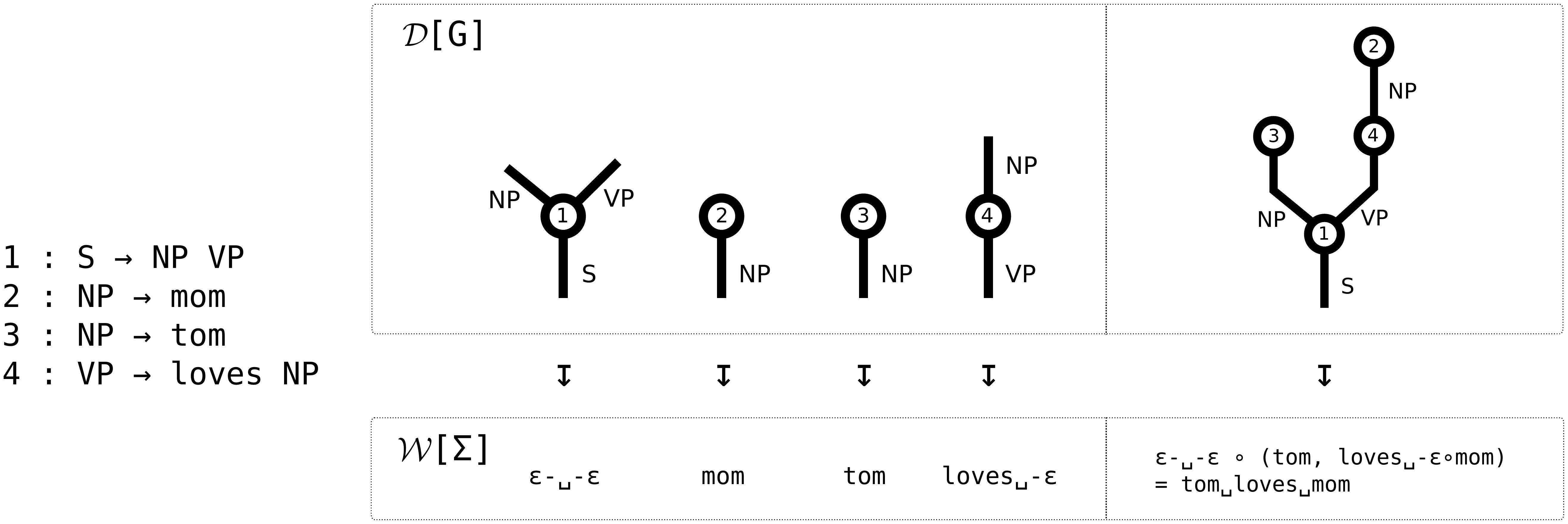}
  \caption{Example of a context-free grammar and the corresponding functor $\Derivs{G} \to \Words{\Sigma}$, indicating the action of the functor on the generating operations of $\Derivs{G}$ as well the induced action on a closed derivation.}
  \label{fig:example-cfg}
\end{figure}

\subsection*{Related work.}
The functorial perspective on context-free grammars that we just sketched and take as a starting point for this article~(\S\ref{section/context-free-languages-of-arrows}) is very similar to that of Walters in his brief ``note on context-free languages''~\cite{Walters1989}, with the main difference that we generalize it to context-free languages over any category by considering the operad of sliced arrows construction.
It is also closely related to de~Groote's treatment of CFGs in his paper introducing \emph{abstract categorial grammars}~\cite{deGroote2001acgs} and in a later article with Pogodalla~\cite{deGrootePogodalla2004}, which were developed within a $\lambda$-calculus framework rather than a categorical~/~operadic one.
The contour category construction and the contour~/~splicing adjunction between operads and categories is fundamental to our treatment of the Chomsky-Sch{\"u}tzenberger representation theorem (\S\ref{section/CSR-theorem}), and provides an unexpected geometric lens on context-free grammars, evocative of the geometry of interaction~\cite{Girard1989goi1}.
Although the adjunction is not identified, this geometric perspective is also apparent in Slavnov's recent work~\cite{Slavnov2020}, inspired both by abstract categorial grammars and by proof-nets for classical linear logic~\cite{Girard1987}, wherein he constructs a compact closed monoidal category of \emph{word cobordisms} reminiscent of the operad of spliced words.
%

The fibrational perspective on non-deterministic finite state automata as finitary ULF functors that we take in the middle of this article (\S\ref{section/ndfa}) is also similar in spirit to (and roughly dual to) Colcombet and Petri\c{s}an's proposal~\cite{ColcombetPetrisan2020} for modelling various forms of automata as functors.
Our approach is motivated by the desire to place both context-free grammars and non-deterministic finite state automata within a common framework, facilitating for example taking the intersection of a context-free language with a regular language.
Our main goal is to develop a unified framework for type systems and other deductive systems,
which would benefit from the classical body of work on context-free languages and automata theory, and in a future article we intend to consider parsing from left to right \cite{Knuth1965,Earley1970}.

\section{Context-free languages of arrows in a category}
\label{section/context-free-languages-of-arrows}
  
In this section we explain how the functorial formulation of context-free grammars discussed in the Introduction extends naturally to context-free grammars over any category, which at the same time leads to a simplification of the classical treatment of context-free languages while also providing a useful generalization.
First, we need to explain how the operad $\Words{\Sigma}$ of spliced words mentioned in the Introduction generalizes to define an operad $\Words{\Ccategory}$ of spliced arrows over any category $\Ccategory$.

\subsection{The operad of spliced arrows of a category}
\label{section/spliced-arrows}

\begin{definition}\label{definition/spliced-arrows}
Let $\Ccategory$ be a category.
The \defin{operad $\Words{\Ccategory}$ of spliced arrows in $\Ccategory$} is defined as follows:
\begin{itemize}
\item its colors are pairs $(A,B)$ of objects of $\Ccategory$;
\item its $n$-ary operations $(A_1,B_1),\dots,(A_n,B_n) \to (A,B)$ consist of sequences $w_0{-}w_1{-}\dots{-}w_n$ of $n+1$ arrows in $\Ccategory$ separated by $n$ gaps notated $-$, where each arrow must have type $w_i : B_i \to A_{i+1}$ for $0 \le i \le n$, under the convention that $B_0 = A$ and $A_{n+1} = B$;
\item composition of spliced arrows is performed by ``splicing into the gaps'': formally, the partial composition $f \circ_i g$ of a spliced arrow $g = u_0{-}\dots{-}u_m$ into another spliced arrow $f = w_0{-}\dots{-}w_n$ is defined by substituting $g$ for the $i$th occurrence of $-$ in $f$ (starting from the left using 0-indexing) and interpreting juxtaposition by sequential composition in $\Ccategory$ (see Fig.~\ref{fig:spliced-arrows} for an illustration); 
\item the identity operation on $(A,B)$ is given by $\id[A]{-}\id[B]$.
\end{itemize}
\end{definition}
\begin{figure}
  \begin{center}\includegraphics[width=0.85\textwidth]{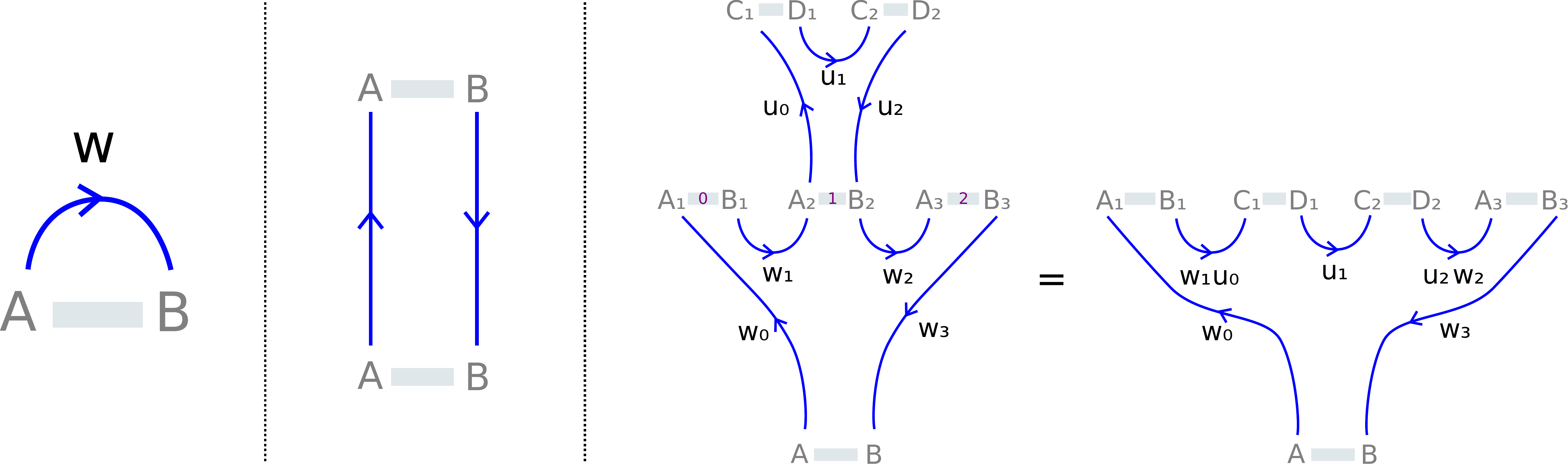}\end{center}
  \caption{Left: a constant of $\Words{\Ccategory}$. Middle: an identity operation. Right: illustration of partial composition. Here we compose an operation $g = u_0{-}u_1{-}u_2 : (C_1,D_1),(C_2,D_2) \to (A_2,B_2)$
    into $f = w_0{-}w_1{-}w_2{-}w_3 : (A_1,B_1),(A_2,B_2),(A_3,B_3) \to (A,B)$ at the gap labelled 1 to obtain the operation $f \circ_1 g = w_0{-}w_1u_0{-}u_1{-}u_2w_2{-}w_3 : (A_1,B_1),(C_1,D_1),(C_2,D_2),(A_3,B_3) \to (A,B)$.}
  \label{fig:spliced-arrows}
\end{figure}
\noindent
It is routine to check that $\Words{\Ccategory}$ satisfies the associativity and neutrality axioms of an operad, these reducing to associativity and neutrality of composition of arrows in $\Ccategory$.
Indeed, the spliced arrows operad construction defines a functor
$ \Words{-} : \Cat \to \Operads{} $
since any functor of categories $F : \Ccategory \to \Dcategory$ induces a functor of operads
$\Words{F} : \Words{\Ccategory} \to \Words{\Dcategory}$,
acting on colors by $(A,B) \mapsto (FA,FB)$ and on operations by
$w_0{-}\dots{-}w_n \mapsto Fw_0{-}\dots{-}Fw_n$.

\begin{example}\label{example/spliced-words}
  Words $w \in \Sigma^*$ may be regarded as the arrows $w : \ast \to \ast$ of a one-object category that we notate $\Monoid[\Sigma]$, with sequential composition of arrows in $\Monoid[\Sigma]$ given by concatenation.
  The operad $\Words{\Sigma}$ of spliced words in $\Sigma$ described in the Introduction is identical to the operad $\Words{\Monoid[\Sigma]}$ of spliced arrows in $\Monoid[\Sigma]$, and more generally, any monoid seen as a one-object category induces a corresponding operad of spliced words of that monoid.
\end{example}
\begin{remark}\label{remark/non-free}
  Although the one-object category $\Monoid[\Sigma]$ is a free category (being freely generated by the arrows $a : \ast \to \ast$ ranging over letters $a \in \Sigma$), this property of being freely generated does not extend to its operad of spliced words.
  Indeed, an operad of spliced arrows $\Words{\Ccategory}$ is almost \emph{never} a free operad. 
  That's because any pair of objects $A$ and $B$ induces a binary operation $\id[A]{-}\id[A]{-}\id[B] : (A,A),(A,B) \to (A,B)$,
  and any arrow $w : A \to B$ of $\Ccategory$ induces a corresponding constant $w : (A,B)$.
  Since $\id[A]{-}\id[A]{-}\id[B] \circ (\id[A],w) = w$, $\Words{\Ccategory}$ cannot be a free operad except in the trivial case where $\Ccategory$ has no objects and no arrows.
\end{remark}

\begin{example}\label{example/spliced-ordinal-sum}
  The \emph{ordinal sum} \cite{Lawvere1969ordinal} of two categories $\Ccategory$ and $\Dcategory$ may be constructed as a category $\Ccategory +_\sigma \Dcategory$ whose objects are the disjoint union of the objects of both categories, and whose arrows are the disjoint union of the arrows of both categories with an additional arrow $A \to B$ freely adjoined for every pair of objects $A \in \Ccategory$, $B \in \Dcategory$.
  The operations of the spliced arrows operad $\Words{\Ccategory +_\sigma \Dcategory}$ may be described accordingly as consisting of a (possibly empty) sequence of arrows of $\Ccategory$ followed by a (possibly empty) sequence of arrows of $\Dcategory$.
  As a special case, consider spliced arrows over the ordinal sum $\Monoid[\Sigma]^\top = \Monoid[\Sigma] +_\sigma \TerminalCat$, which is the two-object category obtained from $\Monoid[\Sigma]$ by freely adjoining an object $\top$ and an arrow $\$ : \ast \to \top$.
  The operad $\Words{\Monoid[\Sigma]^\top}$ includes operations of the form
  $ f = w_0{-}\dots{-}w_n\$ : (\ast,\ast), \dots, (\ast,\ast) \to (\ast,\top) $
  which may be seen as spliced words with an explicit ``end of input'' marker, since it is impossible to concatenate anything after the last word $w_n$ using only substitution in the operad.
  (See Example~\ref{example/cfg-with-end-of-input} below for an application of this construction.)
\end{example}

\begin{remark}
  The operad $\Words{\TerminalCat}$ of spliced arrows over the terminal category is isomorphic to the terminal operad, with a single color $(\ast,\ast)$, and a single $n$-ary operation $\id{-}\dots{-}\id : (\ast,\ast),\dots,(\ast,\ast) \to (\ast,\ast)$ of every arity $n$.
  Likewise, the operad of spliced arrows over the product of two categories decomposes as a product of spliced arrow operads $\Words{\Ccategory\times \Dcategory} \cong \Words{\Ccategory}\times\Words{\Dcategory}$.
  This might suggest that the functor $\Words{-} : \Cat \to \Operads{}$ is a right adjoint, and we will see in \S\ref{section/contour-category} that this is indeed the case.
\end{remark}


\subsection{Context-free grammars and context-free derivations over a category}
\label{section/context-free-grammar}

We already sketched in the Introduction how an ordinary context-free grammar $G = (\Sigma,\Nonterminals,\Productions,\Sentence)$ gives rise to a freely generated operad $\Derivs{G}$ equipped with a functor to the operad of spliced words $\Words{\Sigma}$, where $\Derivs{G}$ has the set of non-terminals $\Nonterminals$ as objects and operations freely generated by the productions in $\Productions$.
To make this more precise and to generalize to context-free grammars over arbitrary categories, we first need to recall the notion of a (colored non-symmetric) species, and how one gives rise to a free operad.

A \emph{colored non-symmetric species,} which we abbreviate to ``species''\footnote{Species in this sense are also called ``multigraphs'' \cite{Lambek1987} since they bear a precisely analogous relationship to multicategories as graphs do to categories, but that terminology unfortunately clashes with a different concept in graph theory.
We use ``species'' to emphasize the link with Joyal's theory of (uncolored symmetric) species \cite{Joyal1981} and also with generalized species \cite{FGHW2008}.} for short, is a tuple $\Sspecies = (C, V, i, o)$ consisting of a span of sets
$\begin{tikzcd}
  C^* & \arrow[l,"i"']V\arrow[r,"o"] & C
\end{tikzcd}$
with the following interpretation: $C$ is a set of ``colors'', $V$ is a set of ``nodes'', and the functions $i : V \to C^*$ and $o : V \to C$ return respectively the list of input colors and the unique output color of each node.
Adopting the same notation as we use for operations of an operad, we write $x : R_1,\dots,R_n \to R$ to indicate that $x \in V$ is a node with list of input colors $i(x) = (R_1,\dots,R_n)$ and output color $o(x) = R$.
However, it should be emphasized that a species by itself only contains bare coloring information about the nodes, and does not say how to compose them as operations.

We say that a species is \emph{finite} (also called \emph{polynomial} \cite{Joyal1986}) just in case both sets $C$ and $V$ are finite.

A map of species $\phi : \Sspecies \to \Rspecies$ from $\Sspecies = (C,V,i,o)$ to $\Rspecies = (D,W,i',o')$ is given by a pair $\phi = (\phi_C,\phi_V)$ of functions $\phi_C : C \to D$ and $\phi_V : V \to W$ making the diagram commute:
\[
\begin{tikzcd}
  C^*\arrow[d,"\phi_C^*"] & \arrow[l,"i"']V\arrow[r,"o"]\arrow[d,"\phi_V"] & C \arrow[d,"\phi_C"]\\
  D^* & \arrow[l,"i'"']W\arrow[r,"o'"] & D
\end{tikzcd}
\]
Equivalently, overloading $\phi$ for both $\phi_C$ and $\phi_V$, every node $x : R_1,\dots,R_n \to R$ of $\Sspecies$ must be sent to a node $\phi(x) : \phi(R_1),\dots,\phi(R_n) \to \phi(R)$ of $\Rspecies$.
Every operad $\Ooperad$ has an underlying species with the same colors and whose nodes are the operations of $\Ooperad$, and this extends to a forgetful functor $\ForgetOperadFunctor:\Operads{}\to\Species{}$ from the category of operads and functors of operads to the category of species and maps of species.
Moreover, this forgetful functor has a left adjoint 
\begin{equation}\label{equation/adjunction-species-operad}
\begin{tikzcd}
\Species{}
\arrow[rr,"\FreeOperad{}",yshift=1.1ex]
& \bot &
\Operads{}\arrow[ll,"{\ForgetOperadFunctor}",yshift=-1.1ex]
\end{tikzcd}
\end{equation}
which sends any species $\Sspecies$ to an operad $\FreeOperad{\Sspecies}$ with the same set of colors and whose operations are freely generated from the nodes of $\Sspecies$.
By the universal property of the adjoint pair, there is a natural isomorphism of hom-sets
\[
  \Operads{}(\FreeOperad{\Sspecies}, \Ooperad) \cong \Species{}(\Sspecies,\mathop{\ForgetOperadFunctor}{\Ooperad})
\]
placing functors of operads $p : \FreeOperad{\Sspecies} \to \Ooperad$ and maps of species $\phi : \Sspecies \to \mathop{\ForgetOperadFunctor}{\Ooperad}$ in one-to-one correspondence.
In the sequel, we will leave the action of the forgetful functor implicit, writing $\Ooperad$ for both an operad and its underlying species $\mathop{\ForgetOperadFunctor}\Ooperad$.

We are now ready to introduce the main definitions of this section.

\begin{definition}\label{definition/cfg-over-cat}  
  A \defin{context-free grammar of arrows} is a tuple $G = (\Ccategory,\Sspecies,S,\phi)$ consisting of a finitely generated category $\Ccategory$, a finite species $\Sspecies$ equipped with a distinguished color $S \in \Sspecies$ called the \emph{start symbol}, and a
  functor of operads $p : \FreeOperad{\Sspecies} \to \Words{\Ccategory}$.
  A color of~$\Sspecies$ is then called a \defin{non-terminal} while an operation of~$\FreeOperad\Sspecies$ is called a \defin{derivation}.
  The \defin{context-free language of arrows} $\Lang{G}$ generated by the grammar $G$ is the subset of
  arrows in $\Ccategory$ which, seen as constants of $\Words{\Ccategory}$, are in the image of constants of color $S$ in $\FreeOperad{\Sspecies}$, that is, $\Lang{G} = \set{p(\alpha) \mid \alpha : S}$.
\end{definition}
\noindent
As suggested in the Introduction and by Example~\ref{example/spliced-words}, every context-free grammar in the classical sense $G = (\Sigma,\Nonterminals,\Sentence,\Productions)$ corresponds to a context-free grammar over $\Monoid[\Sigma]$.
For instance, for the grammar in Fig.~\ref{fig:example-cfg}, the corresponding species $\Sspecies$ has three colors and four nodes, and the functor $p$ is uniquely defined by the action on the generators in $\Sspecies$ displayed in the middle of the figure.
Conversely, any finite species $\Sspecies$ equipped with a color $\Sentence\in\Sspecies$ and a functor of operads $p : \FreeOperad{\Sspecies} \to \Words{\Monoid[\Sigma]}$ uniquely determines a context-free grammar over the alphabet $\Sigma$.
Indeed, the colors of $\Sspecies$ give the non-terminals of the grammar and $\Sentence$ the distinguished start symbol, while the nodes of $\Sspecies$ together with the functor $p$ give the production rules of the grammar, with each node $x : R_1,\dots,R_n \to R$ such that $p(x) = w_0{-}w_1{-}\dots{-}w_n$ determining a context-free production rule $x : R \to w_0 R_1 w_1 \dots R_n w_n$.
\begin{proposition}
  A language $L \subseteq \Sigma^*$ is context-free in the classical sense if and only if it is the language of arrows of a context-free grammar over $\Monoid[\Sigma]$.
\end{proposition}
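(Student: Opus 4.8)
The plan is to prove the two directions separately, observing that most of the work has already been done in the discussion immediately preceding the statement. The equivalence really amounts to checking that the translation sketched there — between classical context-free grammars $G = (\Sigma,\Nonterminals,\Sentence,\Productions)$ and pairs $(\Sspecies, p)$ with $p : \FreeOperad{\Sspecies} \to \Words{\Monoid[\Sigma]}$ — is language-preserving in both directions, so the core content is bookkeeping about derivations versus operations of the free operad.

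\medskip
\noindent\textbf{Plan.}
For the forward direction, suppose $L = \Lang[G]$ for a classical CFG $G = (\Sigma,\Nonterminals,\Sentence,\Productions)$. Using the observation that every right-hand side $\sigma$ factors uniquely as $\sigma = w_0 R_1 w_1 \dots R_n w_n$, I would define a finite species $\Sspecies$ with colors $\Nonterminals$ and one node $r : R_1,\dots,R_n \to R$ for each production $r = (R \to \sigma) \in \Productions$, take the start symbol to be $\Sentence$, and define $p : \FreeOperad{\Sspecies} \to \Words{\Monoid[\Sigma]}$ via the adjunction~\eqref{equation/adjunction-species-operad} by sending the node $r$ to the spliced word $w_0{-}w_1{-}\dots{-}w_n$. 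The key lemma is then that closed derivations $\alpha : \Sentence$ in $\FreeOperad{\Sspecies}$ are in bijection with parse trees of $G$ rooted at $\Sentence$, and that under this bijection $p(\alpha)$ is exactly the word derived by that parse tree. This is proved by induction on the structure of the parse tree (equivalently, on the operation $\alpha$ as generated from the nodes by operadic composition), the inductive step being a direct computation that splicing the words of the subtrees into the gaps of $w_0{-}\dots{-}w_n$ reproduces the classical yield $w_0 v_1 w_1 \dots v_n w_n$. Hence $\set{p(\alpha) \mid \alpha : \Sentence} = \Lang[G] = L$, so $L$ is the language of arrows of a CFG over $\Monoid[\Sigma]$.

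\medskip
\noindent\textbf{Plan (converse).}
For the converse, given a finite species $\Sspecies$ with start symbol $\Sentence$ and a functor $p : \FreeOperad{\Sspecies} \to \Words{\Monoid[\Sigma]}$, I would run the same translation backwards: the colors of $\Sspecies$ become the nonterminals, and each node $x : R_1,\dots,R_n \to R$ with $p(x) = w_0{-}w_1{-}\dots{-}w_n$ becomes the production $x : R \to w_0 R_1 w_1 \dots R_n w_n$, yielding a classical CFG $G_{\Sspecies,p}$ over $\Sigma$. Here I would use that a functor out of a free operad is determined by its restriction to generators (again the universal property in~\eqref{equation/adjunction-species-operad}), so this data is exactly what is needed, and finiteness of $\Sspecies$ guarantees $\Productions$ is finite. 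Invoking the same bijection-between-derivations-and-parse-trees lemma as above gives $\Lang[G_{\Sspecies,p}] = \set{p(\alpha) \mid \alpha : \Sentence} = \Lang[G]$, completing the proof.

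\medskip
\noindent\textbf{Main obstacle.}
The only real subtlety — and the step I expect to be the crux — is making precise and justifying the correspondence between operations of the free operad $\FreeOperad{\Sspecies}$ and the (incomplete) parse trees of the associated grammar, together with the claim that the forgetful functor $p$ computes the yield. One must be careful that the free operad's operations are generated modulo the operad axioms (associativity and unitality of partial composition), and check that the yield map is well-defined on these equivalence classes; but this is immediate because $\Words{\Monoid[\Sigma]}$ is itself an operad and $p$ is a functor of operads, so it automatically respects exactly those axioms. Once this identification is in hand, everything else is the routine inductive computation sketched above, and the two translations are evidently mutually inverse up to renaming.
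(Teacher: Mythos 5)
Your proposal is correct and follows essentially the same route as the paper, which states the proposition as an immediate consequence of the two-way translation described in the preceding paragraph (colors $\leftrightarrow$ non-terminals, nodes $x$ with $p(x)=w_0{-}\dots{-}w_n$ $\leftrightarrow$ productions $R\to w_0R_1w_1\dots R_nw_n$, using the free/forgetful adjunction to determine $p$ from its action on generators). The only content you add is an explicit inductive verification of the bijection between constants $\alpha:\Sentence$ of $\FreeOperad{\Sspecies}$ and parse trees, with $p(\alpha)$ computing the yield, which the paper leaves implicit via the identification of operations of the free operad with $\Sspecies$-rooted trees.
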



\noindent
An interesting feature of the general notion of context-free grammar of arrows $G = (\Ccategory,\Sspecies,\Sentence,p)$ is that the non-terminals of the grammar are \emph{sorted} in the sense that every color of $\Sspecies$ is mapped by $p$ to a unique color of $\Words{\Ccategory}$, corresponding to a pair of objects of $\Ccategory$.
Adapting the conventions from our work on type refinement systems, we sometimes write $R \refs (A,B)$ to indicate that $p(R) = (A,B)$ and say that $R$ refines the ``gap type'' $(A,B)$.
The language $\Lang{G}$ generated by a grammar with start symbol $S \refs (A,B)$ is a subset of the hom-set $\Ccategory(A,B)$.

\begin{example}\label{example/cfg-with-end-of-input}
  To illustrate some of the versatility afforded by the more general notion of context-free grammar of arrows, consider a CFG over the category $\Monoid[\Sigma]^\top$ from Example~\ref{example/spliced-ordinal-sum}.
  Such a grammar may include production rules that can only be applied upon reaching the end of the input, which is useful in practice, albeit usually modelled in an ad hoc fashion.
  For example, the grammar of arithmetic expressions defined by Knuth in the original paper on LR parsing \cite[example (27)]{Knuth1965} may be naturally described as a grammar over $\Monoid[\Sigma]^\top$, which in addition to having three ``classical'' non-terminals $E, T, P \refs (\ast,\ast)$ contains a distinguished non-terminal $S \refs (\ast,\top)$.
  Knuth's production $0 : S \to E\$$ is then just a unary node $0 : E \to S$ in $\Sspecies$, mapped by $p$ to the operation $\epsilon{-}\$ : (\ast,\ast) \to (\ast,\top)$ in $\Words{\Monoid[\Sigma]^\top}$. 
\end{example}
\noindent
More significant examples of context-free languages of arrows over categories with more than one object will be given in \S\ref{section/CSR-theorem}, including context-free grammars over the runs of finite-state automata.

Finally, let us remark that context-free grammars of arrows may be organized into a comma category, observing that a grammar $G$ may be equivalently considered as a triple of a \emph{pointed} finite species $(\Sspecies,S)$, a \emph{bipointed} finitely generated category $(\Ccategory,A,B)$, and a map of pointed operads $p : (\FreeOperad{\Sspecies},S) \to (\Words{\Ccategory},(A,B))$.
Since the operad of spliced arrows construction lifts to a functor $\Words{-} : \Cat_{\bullet,\bullet} \to \Operads{\bullet}$ sending a category $\Ccategory$ equipped with a pair of objects $A$ and $B$ to the operad $\Words{\Ccategory}$ equipped with the color $(A,B)$, and likewise the free / forgetful adjunction \eqref{equation/adjunction-species-operad} lifts to an adjunction between pointed species and pointed operads, a CFG can therefore be considered as an object of the comma category $\FreeOperad{} \downarrow \Wordsonly$.
Although we will not explore this perspective further here, let us mention that it permits another way of understanding the language of arrows generated by a grammar $G$:
as constants of $\Words{\Ccategory}$ are in bijection with arrows of $\Ccategory$,
we have a natural isomorphism $\elts \circ \Words{-} \cong \hom$ for the evident functors $\hom : \Cat_{\bullet,\bullet} \to \Set$ and $\elts : \Operads{\bullet} \to \Set$,
and $\Lang{G}$ is precisely the image of the function $\elts(p) : \elts(\FreeOperad{\Sspecies},S) \to \elts(\Words{\Ccategory},(A,B)) \cong \Ccategory(A,B)$.

\subsection{Properties of a context-free grammar and its associated language}
\label{section/properties-of-cfg-and-language}

Standard properties of context-free grammars \cite[Ch.~4]{SippuSoisoiParsingTheory1}, considered as CFGs of arrows $G = (\Monoid[\Sigma],\Sspecies,\Sentence,p)$, may be reformulated as properties of either the species $\Sspecies$, the operad $\FreeOperad{\Sspecies}$, or the functor $p : \FreeOperad{\Sspecies} \to \Words{\Monoid[\Sigma]}$, with varying degrees of naturality:

\begin{itemize}
\item $G$ is \emph{linear} just in case $\Sspecies$ only has nodes of arity $\le 1$.
  It is \emph{left-linear} (respectively, \emph{right-linear}) just in case it is linear and every unary node $x$ of $\Sspecies$ is mapped by $p$ to an operation of the form $\epsilon{-}w$ (resp.~$p(x) = w{-}\epsilon$). 
\item $G$ is in \emph{Chomsky normal form} if $\Sspecies$ only has nodes of arity 2 or 0, the color $\Sentence$ does not appear as the input of any node, every binary node is mapped by $p$ to $\epsilon{-}\epsilon{-}\epsilon$ in $\Words{\Monoid[\Sigma]}$, and every nullary node is mapped to a letter $a \in \Sigma$, unless $R = \Sentence$ in which case it is possible that $p(x) = \epsilon$.
  (This last condition can be made more natural by considering $G$ as a context-free grammar over $\Monoid[\Sigma]^\top$ with $S \refs (\ast,\top)$, see~Example~\ref{example/cfg-with-end-of-input} above.)
\item
  $G$ is \emph{bilinear} (a generalization of Chomsky normal form \cite{LangeLeiss09,Leermakers1989}) iff $\Sspecies$ only has nodes of arity $\le 2$.
\item
  $G$ is \emph{unambiguous} iff for any pair of constants $\alpha, \beta : S$ in $\FreeOperad{\Sspecies}$, if $p(\alpha) = p(\beta)$ then $\alpha = \beta$.
  Note that if $p$ is faithful then $G$ is unambiguous, although faithfulness is a stronger condition in general.
\item
  A non-terminal $R$ of $G$ is \emph{nullable} if there exists a constant $\alpha : R$ of $\FreeOperad{\Sspecies}$ such that $p(\alpha) = \epsilon$.
\item
  A non-terminal $R$ of $G$ is \emph{useful} if there exists a pair of a constant $\alpha : R$ and a unary operation $\beta : R \to S$.
  Note that if $G$ has no useless non-terminals then $G$ is unambiguous iff $p$ is faithful.
\end{itemize}
Observe that almost all of these properties 
can be immediately translated to express properties of context-free grammars of arrows over \emph{any} category $\Ccategory$. 
Basic closure properties of classical context-free languages also generalize easily to context-free languages of arrows.
\begin{proposition}\label{proposition/basic-closure}
  \begin{enumerate}
  \item If $L_1,L_2 \subseteq\Ccategory(A,B)$ are context-free languages of arrows, so is their union $L_1 \cup L_2 \subseteq\Ccategory(A,B)$.
  \item If $L_1\subseteq\Ccategory(A_1,B_1), \dots,L_n\subseteq \Ccategory(A_n,B_n)$ are context-free languages of arrows, and $w_0{-}w_1{-}\dots{-}w_n : (A_1,B_1),\dots,(A_n,B_n) \to (A,B)$ is an operation of $\Words{\Ccategory}$, then the ``spliced concatenation'' $w_0 L_1 w_1 \dots L_n w_n = \set{w_0u_1w_1\dots u_n w_n \mid u_1 \in L_1,\dots, u_n \in L_n} \subseteq \Ccategory(A,B)$ is also context-free.
  \item If $L \subseteq \Ccategory(A,B)$ is a context-free language of arrows in a category $\Ccategory$ and $F : \Ccategory \to \Dcategory$ is a functor of categories, then the functorial image $F(L) \subseteq \Dcategory(F(A),F(B))$ is also context-free.
\end{enumerate}
\end{proposition}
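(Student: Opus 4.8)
The plan is to prove each of the three closure properties by explicitly constructing a context-free grammar of arrows whose language is the desired one, exploiting the freeness of $\FreeOperad{\Sspecies}$ and the universal property \eqref{equation/adjunction-species-operad} to define the required functors of operads by specifying only their action on the generating nodes of a species.

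\medskip

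\noindent\textbf{Union.} Given grammars $G_i = (\Ccategory,\Sspecies_i,S_i,p_i)$ with $\Lang{G_i} = L_i$ for $i = 1,2$, and both $S_i \refs (A,B)$, I would form a new species $\Sspecies$ by taking the disjoint union of the colors and nodes of $\Sspecies_1$ and $\Sspecies_2$, adjoining one fresh color $S$ and two fresh unary nodes $\iota_1 : S_1 \to S$ and $\iota_2 : S_2 \to S$. The functor $p : \FreeOperad{\Sspecies} \to \Words{\Ccategory}$ is determined by the universal property: on the nodes coming from $\Sspecies_i$ it acts as $p_i$, and each $\iota_i$ is sent to the identity operation $\id[A]{-}\id[B] : (A,B) \to (A,B)$. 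Then a constant $\alpha : S$ in $\FreeOperad{\Sspecies}$ is precisely $\iota_i \circ \alpha'$ for some constant $\alpha' : S_i$ in $\FreeOperad{\Sspecies_i}$, and $p(\alpha) = p_i(\alpha')$, so $\Lang{G} = L_1 \cup L_2$.

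\medskip

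\noindent\textbf{Spliced concatenation.} Given grammars $G_i = (\Ccategory,\Sspecies_i,S_i,p_i)$ with $\Lang{G_i} = L_i$ and $S_i \refs (A_i,B_i)$, and a fixed operation $w = w_0{-}w_1{-}\dots{-}w_n : (A_1,B_1),\dots,(A_n,B_n) \to (A,B)$ of $\Words{\Ccategory}$, I take $\Sspecies$ to be the disjoint union of the $\Sspecies_i$ together with one fresh color $S \refs (A,B)$ and one fresh $n$-ary node $c : S_1,\dots,S_n \to S$, sent by $p$ to the operation $w$. Any constant $\alpha : S$ factors as $c \circ (\alpha_1,\dots,\alpha_n)$ with each $\alpha_i : S_i$ a constant of $\FreeOperad{\Sspecies_i}$, and functoriality gives $p(\alpha) = w_0\,p_1(\alpha_1)\,w_1 \dots p_n(\alpha_n)\,w_n$, so $\Lang{G}$ is exactly the spliced concatenation $w_0 L_1 w_1 \dots L_n w_n$. (If the $\Sspecies_i$ share colors one first renames to make the union disjoint; this is harmless since the language depends only on the grammar up to isomorphism.)

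\medskip

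\noindent\textbf{Functorial image.} Given $G = (\Ccategory,\Sspecies,S,p)$ with $\Lang{G} = L$ and a functor $F : \Ccategory \to \Dcategory$, I take the grammar $F_*G = (\Dcategory,\Sspecies,S,\Words{F}\circ p)$, where $\Words{F} : \Words{\Ccategory} \to \Words{\Dcategory}$ is the functor of operads induced by $F$ as recalled after Definition~\ref{definition/spliced-arrows}. There is a subtlety: the definition of a context-free grammar of arrows requires the base category to be finitely generated, so one should remark that while $\Dcategory$ itself need not be finitely generated, the full image of $F$ — or indeed any subcategory of $\Dcategory$ containing the finite generating set of $\Ccategory$ and closed under $F$ — is, and we may take that as the base category without changing the language. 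Modulo this point, a constant $\alpha : S$ of $\FreeOperad{\Sspecies}$ is sent by $\Words{F}\circ p$ to $\Words{F}(p(\alpha)) = F(p(\alpha))$ viewed as a constant of $\Words{\Dcategory}$, so $\Lang{F_*G} = F(L)$.

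\medskip

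\noindent The main obstacle, such as it is, is not any computation but the bookkeeping needed to match the constructions against the letter of Definition~\ref{definition/cfg-over-cat}: ensuring the resulting species remains finite (it does, as a finite union with finitely many added colors and nodes in each case), ensuring disjointness of the component species before taking unions, and handling the finite-generation requirement on the base category in the functorial-image case. Each verification that the language is as claimed then reduces to the observation that every constant of color $S$ in the freely generated operad decomposes uniquely through the single fresh top-level node (in parts~1 and~2) or is already a constant of the original operad (in part~3), combined with the functoriality of $p$ and of $\Words{-}$.
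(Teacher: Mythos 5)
Your constructions coincide with the paper's own proof: for (i) and (ii) the paper likewise takes the disjoint union of the component species with one fresh start color and fresh unary nodes mapped to $\id[A]{-}\id[B]$ (resp.\ one fresh $n$-ary node mapped to $w_0{-}\dots{-}w_n$), and for (iii) it likewise postcomposes $p$ with $\Words{F}$ while keeping $\Sspecies$ and $S$. Your extra remark about restricting to a finitely generated subcategory of $\Dcategory$ in part (iii) is a reasonable point of care that the paper passes over silently, but it does not change the argument.
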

\begin{proof}
  The proofs of (i) and (ii) are just refinements of the standard proofs for context-free languages of words, keeping track of the underlying gap types.
  For (iii), suppose given a grammar $G=(\Ccategory,\Sspecies,S,p)$ and a functor of categories $F : \Ccategory \to \Dcategory$.
  Then the grammar $F(G)$ generating the language $F(\Lang{G})$ is defined by postcomposing $p$ with $\Words{F} : \Words{\Ccategory} \to \Words{\Dcategory}$ while keeping the species $\Sspecies$ and start symbol $S$ the same, $F(G)= (\Dcategory,\Sspecies,S,p \Words{F})$.

\end{proof}
\noindent
We will see in \S\ref{section/pulling-back} that other classical closure properties also generalize to context-free languages of arrows.
Finally, we can state a \defin{translation principle} that two grammars $G_1=(\Ccategory,\Sspecies_1,S_1,p_1)$ and~$G_2=(\Ccategory,\Sspecies_2,S_2,p_2)$
over the same category have the same language whenever there is a fully faithful functor of operads $T : \FreeOperad{\Sspecies_1}\to\FreeOperad{\Sspecies_2}$
such that $p_1 = T p_2$ and $T(S_1) = S_2$.


\subsection{A fibrational view of parsing as a lifting problem}
\label{section/benabou}

We have seen how any context-free grammar $G=(\Ccategory,\Sspecies,S,p)$ gives rise to a language 
$\Lang{G} = \set{p(\alpha) \mid \alpha : S}$, corresponding to the arrows of $\Ccategory$ which, seen as constants of $\Words{\Ccategory}$, are in the image of some constant of color $S$ of the free operad $\FreeOperad{\Sspecies}$.
However, beyond characterizing the language defined by a grammar, in practice one is often confronted with a dual problem, namely that of parsing: given a word $w$, we want to compute the set of all its parse trees, or at least determine all of the non-terminals which derive it.
In our functorial formulation of context-free derivations, this amounts to computing the inverse image of $w$ along the functor $p$, i.e., the set of constants $p^{-1}(w) = \set{\alpha \mid p(\alpha) = w}$, or alternatively the set of colors in the image of $p^{-1}(w)$ along the output-color function.

To better understand this view of parsing as a lifting problem along a functor of operads, we find it helpful to first recall the correspondence between functors of categories $p : \Der \to \Base$ and lax functors $F : \Base \to \Span\Set$, where $\Span\Set$ is the bicategory whose objects are sets, whose 1-cells $S:X\,{\spanmap} Y$ are spans $X\leftarrow S\rightarrow Y$,
and whose 2-cells are morphisms of spans.
Suppose given such a functor $p : \Der \to \Base$.
To every object $A$ of $\Base$ there is an associated ``fiber'' $F_A = p^{-1}(A)$ of objects in $\Der$ living over $A$, while to every arrow $w : A \to B$ of $\Base$ there is an associated fiber $F_w = p^{-1}(w)$ of arrows in $\Der$ living over $w$, equipped with a pair of projection functions $F_A\leftarrow F_w\rightarrow F_B$ mapping any lifting $\alpha : R \to S$ of $w : A \to B$ to its source $R \in F_A$ and target $S \in F_B$.
Moreover, given a pair of composable arrows
$u:A\to B$ and $v:B\to C$ in $\Base$, there is a morphism of spans
\begin{equation}\label{equation/lax-o}
\begin{tikzcd}[column sep = 1.2em, row sep = .8em]
{F_uF_v}
\arrow[double,-implies,rr]
&&
{F_{u v}}
\quad
:
\quad
{F_{A}} \arrow[spanmap,rr]
&&
{F_{C}}
\end{tikzcd}
\end{equation}
from the composite of the spans 
$F_{u}:F_{A}\spanmap F_{B}$
and 
$F_{v}:F_{B}\spanmap F_{C}$ associated
to $u:A\to B$ and $v:B\to C$ 
to the span $F_{u v}:F_{A}\spanmap F_{C}$ associated to the composite arrow $u v:A\to C$.
This morphism of spans is realized using composition in the category $\Der$,
namely by the function taking any pair of a lifting $\alpha : R \to S$ of $u$ and a lifting $\beta : S \to T$ of $v$
to the composite $\alpha \beta : R \to T$, which is a lifting of $u v$ by functoriality $p(\alpha\beta) = p(\alpha)p(\beta)$.
Similarly, the identity arrows in the category $\Der$ define, for every object $A$ of the category $\Base$,
a morphism of spans
\begin{equation}\label{equation/lax-I}
\begin{tikzcd}[column sep = 1.2em, row sep = .8em]
{\id[F_A]}
\arrow[double,-implies,rr]
&&
{F_{\id[A]}}
\quad
:
\quad
{F_{A}} \arrow[spanmap,rr]
&&
{F_{A}}
\end{tikzcd}
\end{equation}
from the identity span $F_A\leftarrow F_A\rightarrow F_A$
to the span associated to the identity arrow $\id[A]:A\to A$.
Associativity and neutrality of composition in $\Der$
ensure that the 2-cells \eqref{equation/lax-o} and \eqref{equation/lax-I} make the diagrams below commute:
\adjustbox{scale=0.75,center}{%
\begin{tikzcd}[column sep = .8em, row sep = 1.8em]
{F_u F_v F_w}
\arrow[rr,double,-implies]
\arrow[dd,double,-implies]
&&
{F_u F_{v w}}
\arrow[dd,double,-implies]
\\
\\
{F_{u v}F_w}
\arrow[rr,double,-implies]
&&
{F_{u v w}}
\end{tikzcd}
\quad
\begin{tikzcd}[column sep = .6em, row sep = .8em]
&& {F_{u}}
\arrow[dddd,double,-]
\arrow[lldd,double,-implies]
&&
\\
\\
{F_{\id[A]}F_{u}}
\arrow[rrdd,double,-implies]
\\
\\
&& {F_{u}}
\end{tikzcd}
\quad
\begin{tikzcd}[column sep = .6em, row sep = .8em]
{F_{u}}
\arrow[dddd,double,-]
\arrow[rrdd,double,-implies]
&&
\\
\\
&&
{F_{u} F_{\id[B]}}
\arrow[lldd,double,-implies]
\\
\\
{F_{u}}
\end{tikzcd}
}
for all triples of composable arrows $u:A\to B$, $v:B\to C$ and $w:C\to D$, and therefore that 
this collection of data defines what is called a lax functor $F:\Base\to\Span{\Set}$.
In general it is \emph{only} lax, in the sense that the 2-cells $F_u F_v \Rightarrow F_{uv}$ and $id_{F_A} \Rightarrow F_{id_A}$ are not necessarily invertible.

Conversely, starting from the data provided by a lax functor $F:\Base\to\Span{\Set}$, we can define a category noted $\smallint F$ together with a functor $\pi:\smallint F\to\Base$.
The category $\smallint F$ has objects the pairs $(A,R)$ of an object~$A$ in $\Base$ and an element $R\in F_A$,
and arrows $(w,\alpha):(A,R)\to (B,S)$ the pairs of an arrow $w:A\to B$ in $\Base$ and an element $\alpha\in F_w$ mapped to $R\in F_A$ and $S\in F_B$ by the respective legs of the span $F_A\leftarrow F_w\rightarrow F_B$.
The composition and identity of the category $\smallint F$ are then given by the morphisms of spans $F_u F_v\Rightarrow F_{u v}$ and $\id[F_A] \Rightarrow F_{\id[A]}$ witnessing the lax functoriality of $F:\Base\to\Span{\Set}$.
The functor $\pi:\smallint F\to\Base$ is given by the first projection.
This construction of a category $\smallint F$ equipped with a functor $\pi:\smallint F\to\Base$ starting from a lax functor $F : \Base \to \Span\Set$
is a mild variation of B\'enabou's construction of the same starting from a lax normal functor $F : \Base^\op \to \Dist$ \cite[\S7]{Benabou2000}, which is itself a generalization of the well-known Grothendieck construction of a fibration starting from a pseudofunctor $F : \Base^\op \to \Cat$.
One can show that given a functor of categories 
$p:\Der\to\Base$, the construction
applied to the associated lax functor $F:\Base\to\Span{\Set}$
induces a category $\smallint F$ isomorphic to $\Der$,
in such a way that~$p$ coincides with the isomorphism composed with $\pi$.
Recently, Ahrens and Lumsdaine \cite{AhrensLumsdaine2019} have introduced the useful terminology ``displayed category'' to refer to this way of presenting a category $\Der$ equipped with a functor $\Der \to \Base$ as a lax functor $\Base \to \Span\Set$, with their motivations coming from computer formalization of mathematics.

The constructions which turn a functor of categories $p:\Der\to\Base$ into a lax functor $F:\Base\to\Span{\Set}$ and back into a functor $\pi : \smallint F \to \Base$ can be adapted smoothly to functors of operads, viewing $\Span\Set$ as a 2-categorical operad whose $n$-ary operations
$S:X_1,\dots,X_n\,{\spanmap} Y$ are multi-legged-spans
\adjustbox{scale=0.8,center}{%
$
\begin{tikzcd}[row sep = 0em]
X_1 \\
\vdots &
S
\arrow[lu]\arrow[ld]\arrow[r]
&
Y \\
X_n
\end{tikzcd}
$
}
or equivalently spans $X_1\times\dots\times X_n \leftarrow S\rightarrow Y$,
and with the same notion of 2-cell.
%
We will follow Ahrens and Lumsdaine's suggestion and refer to the data of such a lax functor $F : \Base \to \Span\Set$ representing an operad $\Der \cong \smallint F$ equipped with a functor $p : \Der \to \Base$ as a \defin{displayed operad}.

\subsection{An inductive formula for displayed free operads}
\label{section/magic-formula}

It is folklore that the free operad over a species $\Sspecies = (C,V,i,o)$ may be described concretely as a certain family of trees: operations of $\FreeOperad{\Sspecies}$ are interpreted as rooted planar trees whose edges are colored by the elements of $C$ and whose nodes are labelled by the elements of $V$, subject to the constraints imposed by the functions $i : V \to C^*$ and $o : V \to C$.
The formal construction of the free operad may be viewed as a free monoid construction, adapted to a situation where the ambient monoidal product (in this case, the composition product of species) is only distributive on the left, see \cite[II.1.9]{MarklSchniderStasheff} and \cite[Appendix~B]{BJT1997}.

From the perspective of programming semantics, it is natural to consider the underlying species of $\FreeOperad{\Sspecies}$ as an inductive data type, corresponding to the initial algebra for the endofunctor $W_\Sspecies$ on $C$-colored species defined by
\[
W_\Sspecies = \Rspecies \mapsto \speciesunit{} + \Sspecies \circ \Rspecies
\]
where $+$ denotes the coproduct of $C$-colored species which is constructed by taking the disjoint union of operations, while $\circ$ and $\speciesunit{}$ denote respectively the composition product of $C$-colored species and the identity species, defined as follows.
Given two $C$-colored species $\Sspecies$ and $\Rspecies$, 
the $n$-ary nodes $R_1,\dots,R_n\to R$
of $\Sspecies\circ \Rspecies$ 
are formal composites $g\bullet (f_1,\dots,f_k)$ consisting of a node
$g:S_1,\dots,S_k\to S$ of $\Sspecies$ and of a tuple of nodes
$f_1:\Gamma_1\to S_1$, $\dots$, $f_k:\Gamma_k\to S_k$ of $\Rspecies$,
such that the concatenation of the lists of colors $\Gamma_1,\dots,\Gamma_k$ is equal to the list $R_1,\dots,R_n$.
The unit $\speciesunit{}$ is the $C$-colored species with a single unary node $\ast_R:R\to R$ for every color $R\in C$,
and no other nodes.

As the initial $W_\Sspecies$-algebra, the free operad over $\Sspecies$ is equipped with a map of species
$\speciesunit{} + \Sspecies \circ \FreeOperad{\Sspecies} \longrightarrow \FreeOperad{\Sspecies}$,
which by the Lambek lemma is invertible,
with the following interpretation: any operation of $\FreeOperad{\Sspecies}$ is either an identity operation, or the parallel composition of a node of $\Sspecies$ with a list of operations of $\FreeOperad{\Sspecies}$.
Note that this interpretation also corresponds to a canonical way of decomposing trees labelled by the species $\Sspecies$, also known as \emph{$\Sspecies$-rooted trees} \cite[\S3.2]{BeLaLe1998Species}.

It is possible to derive an analogous inductive characterization of \emph{functors} $p : \FreeOperad{\Sspecies} \to \Ooperad$ from a free operad into an arbitrary operad $\Ooperad$ considered as displayed free operads, i.e., as lax functors $F : \Ooperad \to \Span\Set$ generated by an underlying map of species $\phi : \Sspecies \to \ForgetOperad{\Ooperad}$.
Two subtleties arise.
First, that the species $\Sspecies$ and the operad $\Ooperad$ may in general have a different set of colors, related by the change-of-color function $\phi_C$.
To account for this, rather than restricting the operations $+, \circ,\speciesunit{}$ to the category of $C$-colored species,
one should consider them as global functors
\[ +,\circ : \Species{} \times_{\Set} \Species{} \to \Species{}
  \qquad
  \speciesunit{} : \Set \to \Species{}
\]
on the ``polychromatic'' category of species, which respect the underlying sets of colors in a functorial way.
Second, and more significantly, the above functor $W_\Sspecies$ transports a species $\Rspecies$ living over $\ForgetOperad\Ooperad$ to a species living over $\speciesunit{} + \ForgetOperad\Ooperad \circ \ForgetOperad\Ooperad$, so that in order to obtain again a species living over $\ForgetOperad\Ooperad$ (and thus define an endofunctor) one needs to ``push forward'' along the canonical $W_{\ForgetOperad\Ooperad}$-algebra $[e,m] : \speciesunit{} + \ForgetOperad\Ooperad \circ \ForgetOperad\Ooperad \longrightarrow \ForgetOperad\Ooperad$ that encodes the operad structure of $\Ooperad$, seen as a monoid in $(\Species{},\circ,\speciesunit{})$.
A detailed proof is beyond the scope of this paper, but we nevertheless state the following:
\begin{proposition}Let $\phi : \Sspecies \to \ForgetOperad{\Ooperad}$ be a map of species from a species $\Sspecies$ into an operad $\Ooperad$, and let $p : \FreeOperad{\Sspecies} \to \Ooperad$ be the corresponding functor from the free operad.
  Then the associated lax functor $F : \Ooperad \to \Span\Set$ computing the fibers of $p$ is given by $F_A = \phi^{-1}(A)$ on colors of $\Ooperad$, and by the least family of sets $F_f$ indexed by operations $f : A_1,\dots,A_n \to A$ of $\Ooperad$ such that
\begin{equation}\label{equation/magic-formula}
  F_f \cong \sum_{{f=\id[A]}\atop {\phi(R) = A}}\id[R] + \sum_{f=g\circ (h_1,\dots,h_k)} \phi^{-1}(g)\bullet(F_{h_1},\dots,F_{h_k})
\end{equation}
  where we write $\circ$ for composition in the operad $\Ooperad$ and $\bullet$ for formal composition of nodes in $\Sspecies$ with operations in $\FreeOperad{\Sspecies}$.
  Specializing the formula to constant operations, the left summand disappears and \eqref{equation/magic-formula} simplifies to:
\begin{equation}\label{equation/magic-formula0}
  F_c \cong \sum_{c=g\circ (c_1,\dots,c_k)} \phi^{-1}(g)\bullet(F_{c_1},\dots,F_{c_k})
\end{equation}
\end{proposition}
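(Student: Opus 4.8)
The plan is to read both clauses off the universal property of the free operad, after recognising the right-hand side of~\eqref{equation/magic-formula} as an endofunctor. The first clause is immediate: $p$ acts as $\phi_C$ on colors and the free operad leaves colors untouched, so $F_A = p^{-1}(A) = \phi^{-1}(A)$. For the second, recall that $\FreeOperad{\Sspecies}$, being the initial $W_\Sspecies$-algebra, carries the invertible structure map $\speciesunit{} + \Sspecies \circ \FreeOperad{\Sspecies} \cong \ForgetOperad{\FreeOperad{\Sspecies}}$: every operation of $\FreeOperad{\Sspecies}$ is either an identity node $\ast_R : R \to R$ or a formal composite $g \bullet (\alpha_1,\dots,\alpha_k)$ of a node $g$ of $\Sspecies$ with operations $\alpha_1,\dots,\alpha_k$ of $\FreeOperad{\Sspecies}$. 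Since $p$ is the functor of operads generated by $\phi$, it acts on this decomposition by $\ast_R \mapsto \id[\phi(R)]$ and $g \bullet (\alpha_1,\dots,\alpha_k) \mapsto \phi(g) \circ (p(\alpha_1),\dots,p(\alpha_k))$. Sorting the operations of $\FreeOperad{\Sspecies}$ by their $p$-image therefore yields, for each operation $f : A_1,\dots,A_n \to A$ of $\Ooperad$, a bijection between $F_f = p^{-1}(f)$ and the disjoint union of (i) the identity node $\ast_R$ for each color $R$ with $\phi(R) = A$, in the case $f = \id[A]$; and (ii) the formal composites $\hat g \bullet (\alpha_1,\dots,\alpha_k)$ with $\hat g \in \phi^{-1}(g)$ and $\alpha_i \in F_{h_i}$, ranging over all factorizations $f = g \circ (h_1,\dots,h_k)$ in $\Ooperad$. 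This is exactly the content of~\eqref{equation/magic-formula}, with the span legs of $F$ recovered from the source and target colors of the operations involved; so the true fibers satisfy the formula.

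It remains to identify this family as the \emph{least} one satisfying~\eqref{equation/magic-formula}, and the cleanest route is the one gestured at in the text. One transports $W_\Sspecies$ to an endofunctor $\widetilde{W}$ on the slice $\Species{}/\ForgetOperad{\Ooperad}$: given $\psi : \Rspecies \to \ForgetOperad{\Ooperad}$, the species $\speciesunit{} + \Sspecies \circ \Rspecies$ naturally lives over $\speciesunit{} + \ForgetOperad{\Ooperad} \circ \ForgetOperad{\Ooperad}$, and $\widetilde{W}(\psi)$ is its pushforward along the canonical $W_{\ForgetOperad{\Ooperad}}$-algebra $[e,m] : \speciesunit{} + \ForgetOperad{\Ooperad} \circ \ForgetOperad{\Ooperad} \to \ForgetOperad{\Ooperad}$ encoding the unit and composition of $\Ooperad$. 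Unwinding $[e,m]$ exactly as in the first paragraph shows that, under the identification of an object $\psi$ of $\Species{}/\ForgetOperad{\Ooperad}$ with its family of fibers $(\psi^{-1}(A),\,\psi^{-1}(f))$ over the colors and operations of $\Ooperad$, the endofunctor $\widetilde{W}$ becomes precisely the operator $\Phi$ whose fixed points are the families satisfying~\eqref{equation/magic-formula}. Now $(\FreeOperad{\Sspecies},p)$, regarded as an object of $\Species{}/\ForgetOperad{\Ooperad}$ via the map $\phi_\infty : \ForgetOperad{\FreeOperad{\Sspecies}} \to \ForgetOperad{\Ooperad}$ classifying $p$, carries the $\widetilde{W}$-algebra structure displayed in the first paragraph, and it is \emph{initial}: the forgetful functor $U : \Species{}/\ForgetOperad{\Ooperad} \to \Species{}$ commutes strictly with $\widetilde{W}$ and $W_\Sspecies$ and creates the colimits computing their initial algebras, so $U$ sends the initial $\widetilde{W}$-algebra chain $\emptyset \to \widetilde{W}\emptyset \to \widetilde{W}^2\emptyset \to \cdots$ to the corresponding chain for $W_\Sspecies$, and hence the initial $\widetilde{W}$-algebra to $\FreeOperad{\Sspecies}$. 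Therefore the fibers of $p$ form the initial $\Phi$-algebra, that is, the least family satisfying~\eqref{equation/magic-formula}. Finally,~\eqref{equation/magic-formula0} is just the restriction of~\eqref{equation/magic-formula} to a constant $c$, whose left summand is empty because identities are unary whereas $c$ is nullary.

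The genuine difficulty is concentrated in the middle step: making the pushforward endofunctor $\widetilde{W}$ precise in the polychromatic setting — where $\Sspecies$, $\Rspecies$ and $\Ooperad$ may carry different color sets linked only through $\phi_C$, so that $+$, $\circ$ and $\speciesunit{}$ must be treated as global functors on species that respect colors functorially — and then verifying that $U$ genuinely transfers initiality, i.e.\ that the transfinite chain building the initial $\widetilde{W}$-algebra lies over the one building $\FreeOperad{\Sspecies}$. By contrast, the combinatorial heart — that $p$ acts on the inductive decomposition of $\FreeOperad{\Sspecies}$ exactly as stated, and that this translates fibrewise into~\eqref{equation/magic-formula} and its specialization~\eqref{equation/magic-formula0} — is routine bookkeeping of colors and gap types.
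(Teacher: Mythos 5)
You should first be aware that the paper does not actually prove this proposition: it explicitly says ``A detailed proof is beyond the scope of this paper,'' and the supplementary appendix omits it. So there is no official proof to compare against; what the paper does provide is a roadmap (the two ``subtleties'': the polychromatic treatment of $+,\circ,\speciesunit{}$, and the pushforward along the canonical $W_{\ForgetOperad\Ooperad}$-algebra $[e,m]$), and your argument follows that roadmap exactly.

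On its own merits, your first paragraph is correct and complete: the Lambek-lemma decomposition of $\FreeOperad{\Sspecies}$, the description of how $p$ acts on it, and the resulting fibrewise bijection do establish that the true fibers satisfy \eqref{equation/magic-formula} (the key point, which you implicitly use and could state, is that the decomposition of a non-identity operation as $\hat g\bullet(\alpha_1,\dots,\alpha_k)$ is \emph{unique}, so each $\alpha\in F_f$ lands in exactly one summand, namely the one indexed by the factorization $f=\phi(\hat g)\circ(p\alpha_1,\dots,p\alpha_k)$). The specialization to constants is also right. For the minimality claim, your strategy — realize the right-hand side of \eqref{equation/magic-formula} as the fibrewise shadow of the pushforward endofunctor $\widetilde W$ on species over $\ForgetOperad\Ooperad$, and transfer initiality along the slice forgetful functor — is the intended one and is sound in outline, but as you yourself concede, the step that makes it a proof (setting up $\widetilde W$ precisely when $\Sspecies$ and $\Ooperad$ have different color sets, and verifying that the forgetful functor really creates the initial chain, so that the induced map $\ForgetOperad{\FreeOperad\Sspecies}\to\ForgetOperad\Ooperad$ is the one classifying $p$) is named rather than carried out. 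That leaves your write-up at essentially the same level of rigor as the paper's own discussion — a complete elementary argument that the fibers are \emph{a} solution, plus a credible but unexecuted plan for showing they are the \emph{least} one.
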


\subsection{Application to parsing} 
\label{section/application-to-parsing}

Instantiating \eqref{equation/magic-formula0} with the underlying functor $p : \FreeOperad{\Sspecies} \to \Words{\Ccategory}$ of a context-free grammar of arrows generated by a map of species $\phi : \Sspecies \to \Words{\Ccategory}$, we immediately obtain the following characteristic formula for the family of sets of parse trees $F_w$ of an arrow $w$ in $\Ccategory$, seen as liftings of the constant $w$ in $\Words{\Ccategory}$ to a constant in $\FreeOperad{\Sspecies}$:
\begin{equation}\label{equation/magic-formula0-words}
  F_w \cong \sum_{w=w_0 u_1 w_k\dots u_n w_k} \phi^{-1}(w_0{-}w_1{-}\dots{-}w_k)\bullet(F_{u_1},\dots,F_{u_k})
\end{equation}
Taking the image along the function returning the root label of a parse tree (i.e., the underlying color of the constant in $\FreeOperad{\Sspecies}$), we get that the family of sets of non-terminals $N_w$ deriving $w$ is the least family of sets closed under the following inference rule:
\begin{equation}\label{equation/magic-formula0-words-subset}
  \infer{R \in N_w}{w=w_0 u_1 w_1\dots u_k w_k & \deduce{\phi(x) = w_0{-}w_1{-}\dots{-}w_n}{(x : R_1,\dots,R_k \to R) \in \Sspecies} & R_1 \in N_{u_1} & \dots & R_k \in N_{u_k}}
\end{equation}
This inference rule is essentially the characteristic formula expressed by Leermakers \cite{Leermakers1989} for the defining relation of the ``C-parser'', which generalizes the well-known Cocke-Younger-Kasami (CYK) algorithm.
Presentations of the CYK algorithm are usually restricted to grammars in Chomsky normal form (cf.~\cite{LangeLeiss09}), but as observed by Leermakers, the relation $N_w$ defined by \eqref{equation/magic-formula0-words-subset} can be solved effectively for any context-free grammar $G$ and given word $w = a_1 \dots a_n$ by building up a parse matrix $N_{i,j}$ indexed by the subwords $w_{i,j} = a_{i+1}\dots a_j$ for all $1 \le i \le j \le n$, yielding a cubic complexity algorithm in the case that $G$ is bilinear (cf.~\S\ref{section/properties-of-cfg-and-language}).
Moreover, by adding non-terminals, it is always possible to transform a CFG into a bilinear CFG that generates the same language, even preserving the original derivations up to isomorphism.
\begin{proposition}\label{proposition/bilinear-normal-form}
  For any context-free grammar of arrows $G = (\Ccategory, \Sspecies, S, p)$, there is a bilinear context-free grammar of arrows $G_\bin = (\Ccategory, \Sspecies_\bin, S, p_\bin)$ together with a fully faithful functor of operads
  $B : \FreeOperad{\Sspecies} \to \FreeOperad{\Sspecies_\bin}$ such that $p = B p_\bin$.
  In particular, $\Lang{G} = \Lang{G_\bin}$ by the translation principle.
\end{proposition}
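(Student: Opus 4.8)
The plan is to transform the grammar node by node, repeatedly splitting any node of arity $n \ge 3$ into two nodes of smaller arity until every node has arity $\le 2$, while recording a fully faithful translation functor. Concretely, suppose $\Sspecies$ contains a node $x : R_1,\dots,R_n \to R$ with $n \ge 3$ mapped by $p$ to the spliced arrow $w_0{-}w_1{-}\dots{-}w_n : (A_1,B_1),\dots,(A_n,B_n) \to (A,B)$. Introduce a fresh non-terminal $R_x \refs (A_1, A_3)$ (so that the composite $w_0 w_1$-free portion typechecks: $R_x$ refines the gap type from $A_1$ to $A_3$, using $B_0 = A$, and reading off the composite arrow $w_1 : B_1 \to A_2$ which, after splicing a subderivation of color $R_1$, connects to the next segment) and replace $x$ by two nodes: a binary node $x' : R_1, R_x \to R$ mapped to $w_0{-}w_1{-}\id$ (splitting after the second gap), together with a node $x'' : R_2,\dots,R_n \to R_x$ of arity $n-1$ mapped to $\id{-}w_2{-}\dots{-}w_n$. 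One checks that the types line up using the splicing/composition convention of Definition~\ref{definition/spliced-arrows}, and that the partial composite $x' \circ_1 x''$ (splicing $x''$ into the gap occupied by $R_x$) recovers exactly the original operation $x$ in the free operad, since $p(x') \circ_1 p(x'') = w_0{-}w_1{-}\dots{-}w_n = p(x)$ by associativity of composition in $\Ccategory$.

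Iterating this splitting strictly decreases the multiset of node arities (in the well-founded order on finite multisets of naturals), so after finitely many steps we reach a species $\Sspecies_\bin$ all of whose nodes have arity $\le 2$, i.e.\ a bilinear grammar $G_\bin = (\Ccategory, \Sspecies_\bin, S, p_\bin)$; the start symbol is untouched. The translation functor $B : \FreeOperad{\Sspecies} \to \FreeOperad{\Sspecies_\bin}$ is the functor of free operads induced, via the adjunction \eqref{equation/adjunction-species-operad}, by the map of species sending each node $x$ to the operation $x' \circ_1 x''$ (or to $x$ itself if $x$ already had arity $\le 2$), and composing these translations along the iteration. By construction $p = B p_\bin$ on generators, hence on all of $\FreeOperad{\Sspecies}$ by functoriality, and the final sentence follows immediately from the translation principle stated at the end of \S\ref{section/properties-of-cfg-and-language}.

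The main obstacle is proving that $B$ is \emph{fully faithful}, i.e.\ that it is a bijection on each hom-set of operations $R_1,\dots,R_n \to R$; intuitively this says the new derivations of $G_\bin$ are in natural bijection with the old derivations of $G$, with the freshly added non-terminals $R_x$ contributing nothing but bookkeeping. I would argue this using the inductive description of free operads as $\Sspecies$-rooted trees recalled in \S\ref{section/magic-formula}: a derivation in $\FreeOperad{\Sspecies_\bin}$ can, by a normalization/rewriting argument, be put into a canonical form in which every auxiliary node $x''$ occurs immediately below its partner $x'$ (the auxiliary colors $R_x$ appear on no other nodes, so any occurrence of $x''$ is forced to sit under an $x'$), and collapsing each such $x' \circ_1 x''$ pair back to $x$ gives the inverse to $B$ on operations. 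Faithfulness (injectivity) is the easy direction since the collapse is well-defined on all derivations; fullness (surjectivity) requires the observation that the auxiliary colors can only be introduced and immediately eliminated, never left dangling in a derivation with free leaves among the original colors of $\Sspecies$. One must also handle the bookkeeping of iterating: since each auxiliary non-terminal is fresh and of arity strictly bounded by the node it came from, there is no interaction between successive splitting steps, so the composite of fully faithful functors is fully faithful.
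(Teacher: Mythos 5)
Your proposal is essentially correct and rests on the same key idea as the paper's proof: introduce fresh intermediate non-terminals that occur as the output of exactly one node and as an input of exactly one node, so that any derivation in $\FreeOperad{\Sspecies_\bin}$ with root and leaves among the original colors collapses uniquely onto a derivation in $\FreeOperad{\Sspecies}$, which is what gives full faithfulness. The decomposition itself differs, though. You iteratively peel off the first input, replacing an $n$-ary node $x$ by a binary $x'$ and an $(n{-}1)$-ary $x''$ with $x = x' \circ_1 x''$, and terminate via the multiset order; the paper instead curries each node of positive arity in a single pass into a left-leaning chain $x_n \circ_0 \cdots \circ_0 x_1 \circ_0 x_0$ of $n$ binary nodes seeded by a nullary node $x_0$ carrying $w_0$, with intermediate colors $I_{x,i-1} \refs (A,A_i)$ accumulating the prefix of the spliced arrow. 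The paper's one-shot construction avoids the iteration and termination bookkeeping and gives a single induction on $n$ for the bijection on hom-sets, at the cost of also rewriting unary and binary nodes; yours leaves small-arity nodes untouched. Your sketch of full faithfulness (each auxiliary color forces its $x''$ to sit immediately under its $x'$, and the collapse is inverse to $B$) is at essentially the same level of detail as the paper's own one-line justification.

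One local error to repair: with $x' \mapsto w_0{-}w_1{-}\id$ and $x'' \mapsto \id{-}w_2{-}\cdots{-}w_n$, the fresh non-terminal $R_x$ must refine the gap type $(A_2,B)$ --- it stands for the suffix of the production after the gap for $R_1$ and the word $w_1 : B_1 \to A_2$, and the trailing $\id$ in $p(x')$ forces its second component to be $B$ --- not $(A_1,A_3)$ as you wrote; the parenthetical justification you give for the typing is likewise confused. The composite $p(x') \circ_1 p(x'') = w_0{-}w_1{-}\cdots{-}w_n = p(x)$ that you verify is correct once the gap type is fixed, so this is a slip rather than a structural flaw, but as stated the species $\Sspecies_\bin$ would not typecheck against $\Words{\Ccategory}$.
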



\section{Non-deterministic finite state automata as finitary ULF functors over categories and operads}
\label{section/ndfa}
\subsection{Warmup: non-deterministic word automata as finitary ULF functors over categories}\label{section/warmup}
Classically, a non-deterministic finite state automaton $M=(\Sigma,Q,\TranFunc,q_0,F)$
consists of a finite alphabet~$\Sigma$, a finite set~$Q$ of states, 
a function $\TranFunc:\TranSet\to Q\times\Sigma\times Q$ from a finite set~$\TranSet$ of transitions,
an initial state~$q_0\in Q$ and a finite set of accepting states~$F\subseteq Q$.
We will focus first on the underlying ``bare'' automaton~$M = (\Sigma,Q,\TranFunc)$ where the initial and the accepting states have been removed.
Every such bare automaton~$M$ induces a functor of categories $p:\Qcategory \to \Monoid[\Sigma]$
where $\Qcategory$ is the category with the states of the automaton as objects, and with arrows freely generated
by arrows of the form $t:q\to q'$ for any transition $t\in\TranSet$ such that $\TranFunc(t)=(q,a,q')$; 
%
and where the functor $p:\Qcategory\to \Monoid[\Sigma]$ transports every transition $t:q\to q'$ with $\TranFunc(t)=(q,a,q')$ 
to the arrow $a:\ast\to\ast$ representing the letter~$a\in\Sigma$ in the category~$\Monoid[\Sigma]$. 
Under this formulation,
every arrow $\alpha:q_0\to q_f$ of the category~$\Qcategory$ describes 
a run of the automaton~$M$ over the word $w=p(\alpha):\ast\to\ast$ which starts in state~$q_0\in Q$ and ends in state~$q_f\in Q$, 
as depicted below:
\begin{center}
\begin{tikzcd}[column sep=2em,row sep=.8em]
{q_0}\arrow[dd,|->]\arrow[rrrr,dashed,"{\alpha}"] &&&& {q_f}\arrow[dd,|->]
&&
\Qcategory\arrow[dd,"{p}"]
\\
\\
\ast\arrow[rrrr,"w"] &&&& \ast
&&
\Monoid[\Sigma]
\end{tikzcd}
\end{center}
One distinctive property of the functor $p:\Qcategory \to \Monoid[\Sigma]$ is that it has
the unique lifting of factorizations (ULF) property in the sense of Lawvere and Menni~\cite{LawvereMenni2010}.
Recall that a functor of categories $p : \Der \to \Base$ has the ULF property when:
\begin{center}
\begin{tabular}{l}
\vspace{-2em}
\\
For any arrow $\alpha$ of the category $\Der$, if $p(\alpha)= uv$ for some pair of arrows~$u$ and~$v$ of the category~$\Base$,
\vspace{-.4em}
\\
there exists a unique pair of arrows~$\beta$ and~$\gamma$ in $\Der$ such that $\alpha=\beta\gamma$ and $p(\beta)=u$ and $p(\gamma)=v$.
\\
\vspace{-2em}
\end{tabular}
\end{center}
Note that a functor $p : \Der \to \Base$ has the ULF property
precisely when the structure maps of the corresponding lax functor $F:\Base\to\Span{\Set}$ discussed in \S\ref{section/benabou} are invertible, i.e., $F$ is a pseudofunctor.
The ULF property implies an important structural property of non-deterministic finite state automata:
that every arrow $\alpha:q_0\to q_f$ lying above some arrow $p(\alpha)=w$ corresponding to a run of the automaton
can be factored uniquely as a sequence of transitions along the letters of the word~$w$.
%
Conversely, we can easily establish that
\begin{proposition}
  A ULF functor $p:\Qcategory\to\Monoid[\Sigma]$ corresponds to a bare non-deterministic finite state automaton 
  precisely when the fiber $p^{-1}(\ast)$ as well as the fiber $p^{-1}(w)$ is finite for all words $w:\ast\to\ast$.
\end{proposition}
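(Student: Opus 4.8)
The plan is to prove both directions of the equivalence, treating the reverse implication as the substantive one.

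For the forward direction, I would assume $p$ arises from a bare automaton $M = (\Sigma, Q, \TranFunc)$, so that $\Qcategory$ is freely generated by the graph with vertices $Q$ and one edge $t : q \to q'$ per transition $t \in \TranSet$ with $\TranFunc(t) = (q,a,q')$. Then $p^{-1}(\ast)$ is just the set of objects $Q$ of $\Qcategory$, which is finite, and for a word $w = a_1\cdots a_n$ an arrow of $\Qcategory$ over $w$ is a path $t_1\cdots t_n$ whose $j$-th edge is labelled $a_j$, hence determined by a tuple in $\TranSet^n$, so $p^{-1}(w)$ is finite as well (with $p^{-1}(\epsilon) \cong Q$, the empty paths). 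This part is essentially bookkeeping.

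For the reverse direction, I would assume $p$ is ULF with $p^{-1}(\ast)$ finite and $p^{-1}(a)$ finite for every letter $a \in \Sigma$ (of which we only need finiteness of $p^{-1}(\ast)$ and of $p^{-1}(a)$ for single letters, the rest of the stated hypothesis then coming for free from the equivalence), and reconstruct an automaton $M$ by setting $Q := p^{-1}(\ast)$ --- which is also exactly the object set of $\Qcategory$, since $\Monoid[\Sigma]$ has a single object --- taking $\TranSet$ to be the disjoint union of the fibers $p^{-1}(a)$ over all $a \in \Sigma$ (finite, as $\Sigma$ is finite), and defining $\TranFunc(t) := (\dom(t), a, \cod(t))$ for $t \in p^{-1}(a)$, which is well-typed because $\dom(t)$ and $\cod(t)$ then lie over $\ast$. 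Writing $p_M : \Qcategory_M \to \Monoid[\Sigma]$ for the functor induced by this $M$, the universal property of the free category $\Qcategory_M$ lets me send each generating edge $t$ to the arrow $t$ of $\Qcategory$, yielding a functor $\Phi : \Qcategory_M \to \Qcategory$ that is the identity on objects and satisfies $p_M = p\,\Phi$ (checked on generators). What remains is to show $\Phi$ is an isomorphism, and since it is the identity on objects and $p_M = p\,\Phi$, it is enough to show that $\Phi$ restricts to a bijection between the fibers $p_M^{-1}(w)$ and $p^{-1}(w)$ for every word $w$.

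This last point is where the ULF hypothesis does the real work, and I expect it to be the main obstacle: one must check that ULF-ness makes $\Qcategory$ behave over $\Monoid[\Sigma]$ exactly like the free category $\Qcategory_M$. For $w = \epsilon$, the factorization $\epsilon = \epsilon\,\epsilon$ together with the \emph{uniqueness} clause of ULF --- comparing the two factorizations $\alpha = \alpha\,\id$ and $\alpha = \id\,\alpha$ of an arrow $\alpha$ lying over $\epsilon$ --- forces $\alpha$ to be an identity, so $p^{-1}(\epsilon) \cong Q \cong p_M^{-1}(\epsilon)$. For $w = a_1\cdots a_n$ with $n \ge 1$, iterating ULF along $w = a_1(a_2\cdots a_n)$ shows that every arrow of $\Qcategory$ over $w$ has a unique factorization $\beta_1\cdots\beta_n$ with $\beta_j \in p^{-1}(a_j)$, and conversely every composable such tuple composes to an arrow over $w$; on the other side, arrows of $\Qcategory_M$ over $w$ are precisely the paths whose $j$-th edge lies in the $a_j$-summand $p^{-1}(a_j)$ of $\TranSet$, with the same composability condition, since $\TranFunc$ was defined using $\dom$ and $\cod$ in $\Qcategory$. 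These two descriptions coincide, and $\Phi$ implements the identity between them, so it is bijective on each fiber, hence an isomorphism of categories over $\Monoid[\Sigma]$; that is, $p$ corresponds to the bare automaton $M$.
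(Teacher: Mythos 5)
Your proof is correct, and it fills in exactly the argument the paper leaves implicit (the proposition is prefaced only by ``we can easily establish that'' and no proof is given in the text or the appendix). Both directions are handled as intended: the forward direction is the bookkeeping you describe, and the reverse direction correctly isolates the two places where ULF does real work --- forcing arrows over $\epsilon$ to be identities via the uniqueness clause, and giving the unique letter-by-letter factorization that identifies $\Qcategory$ with the free category on the reconstructed transition set.
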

\noindent
This leads us to the following definitions.
\begin{definition}
  We say that a functor $p : \Qcategory \to \Ccategory$ is \defin{finitary} if either of the following equivalent conditions hold:
  \begin{itemize}
  \item the fiber $p^{-1}(A)$ as well as the fiber $p^{-1}(w)$ is finite for every object $A$ and arrow $w$ in the category $\Ccategory$;
  \item the associated lax functor $F:\Ccategory\to\Span{\Set}$ factors via $\Span{\FinSet}$.
  \end{itemize}
\end{definition}

\begin{definition}
  A \defin{non-deterministic finite state automaton over a category} is given by a tuple
  $M = (\Ccategory, \Qcategory, {p : \Qcategory \to \Ccategory}, q_0, q_f)$
  consisting of two categories $\Ccategory$ and $\Qcategory$, a finitary ULF functor $p:\Qcategory\to\Ccategory$,
  and a pair $q_0,q_f$ of objects of $\Qcategory$.
  An object of~$\Qcategory$ is then called a \defin{state} and an arrow of~$\Qcategory$ is called a \defin{run} of the automaton.
  The \defin{regular language of arrows} $\Lang{M}$ recognized by the automaton
  is the set of arrows $w$ in $\Ccategory$ that can be lifted along~$p$ to an arrow $\alpha:q_0\to q_f$ in $\Qcategory$, that is $\Lang{M} = \set{p(\alpha) \mid \alpha : q_0 \to q_f}$.
\end{definition}
\noindent
Note that the regular language of arrows $\Lang{M}$ recognized by an automaton $M$ is a subset of the hom-set
$\Ccategory(A,B)$, where $A=p(q_0)$ and $B=p(q_f)$.
\begin{remark}
Any non-deterministic finite state automaton $M$ in the standard sense may be converted into an
automaton with a single accepting state (and without $\epsilon$-transitions) that accepts the same language, 
\emph{except} in the case that the language contains $\epsilon$ and is not closed under concatenation.
The usual construction defines a new automaton $M'$ with an additional state $q_f$ and the same transitions as $M$, except that
every transition $q \to q'$ to an accepting state $q' \in F$ of the old automaton is replaced by a transition $q \to q_f$ in
the new automaton.
The problem arises when the initial state $q_0$ is also accepting, in which case the language accepted by $M'$ will be closed under concatenation.

Observe that this issue goes away if we instead consider the automaton obtained by transformation of $M$ as an automaton $M' = (\Monoid[\Sigma]^\top,\Qcategory',p',q_0,q_f)$ over the two-object category $\Monoid[\Sigma]^\top =  \Monoid[\Sigma] +_\sigma \TerminalCat$ defined in Example~\ref{example/spliced-ordinal-sum}.
Indeed, we can take $\Qcategory'$ and $p'$ to be defined from $\Qcategory$ and $p$ by adjoining a single object $q_f$ lying over $\top$, together with a single arrow $q' \to q_f$ lying over $\$ : \ast \to \top$ for every accepting state $q' \in F$ of $M$.
Since arrows of type $\ast \to \top$ do not compose, the aforementioned problem does not arise.
\end{remark}
\begin{proposition}
  A language $L \subseteq \Sigma^*$ is regular in the classical sense if and only if  $L\$$  is the regular language of arrows
  of a non-deterministic finite state automaton over $\Monoid[\Sigma]^\top$.
\end{proposition}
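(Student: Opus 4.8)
The plan is to reduce both directions to the earlier Proposition identifying bare non-deterministic finite state automata with the finitary ULF functors $p : \Qcategory \to \Monoid[\Sigma]$, exploiting that $\Monoid[\Sigma]$ embeds into $\Monoid[\Sigma]^\top = \Monoid[\Sigma] +_\sigma \TerminalCat$ as a subcategory closed under precomposition: no arrow of $\Monoid[\Sigma]^\top$ ends at $\ast$ unless it already lies in $\Monoid[\Sigma]$, the only arrow out of $\top$ is $\id[\top]$, and $\Monoid[\Sigma]^\top(\ast,\top) \cong \Sigma^*$ via $w \mapsto w\$$.

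For the forward implication, I would start from a classical NFA $M = (\Sigma,Q,\TranFunc,q_0,F)$ with $L(M) = L$, take the associated finitary ULF functor $p : \Qcategory \to \Monoid[\Sigma]$, and carry out exactly the transformation recalled in the Remark above: adjoin to $\Qcategory$ a fresh object $q_f$ over $\top$ together with one arrow $\gamma_{q'} : q' \to q_f$ over $\$$ for each accepting state $q' \in F$, extending $p$ to a functor $p' : \Qcategory' \to \Monoid[\Sigma]^\top$. The first task is to check that $p'$ is still finitary (its fibers over $\ast$, over $\top$, over words $w$, over $w\$$, and over $\id[\top]$ are all finite) and still ULF: the only factorizations not already handled inside $\Qcategory$ concern an arrow $\alpha : q \to q_f$ lying over some $w\$$, and each such $\alpha$ is uniquely of the form $\beta\gamma_{q'}$ with $\beta$ an arrow of $\Qcategory$ over $w$ and $q' \in F$, so the unique lifting of factorizations follows from that of $p$ together with the fact that $\id[q_f]$ is the only arrow over $\id[\top]$. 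Then $M' = (\Monoid[\Sigma]^\top,\Qcategory',p',q_0,q_f)$ is a non-deterministic finite state automaton over a category whose runs $q_0 \to q_f$ are precisely the composites $\beta\gamma_{q'}$ where $\beta : q_0 \to q'$ is a run of $M$ ending in an accepting state, so $\Lang{M'} = \set{p(\beta)\$ \mid q' \in F,\ \beta : q_0 \to q'} = L(M)\$ = L\$$.

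For the converse, suppose $M = (\Monoid[\Sigma]^\top,\Qcategory,p,q_0,q_f)$ is a non-deterministic finite state automaton over $\Monoid[\Sigma]^\top$ with $\Lang{M} = L\$$. If $L = \emptyset$ there is nothing to prove, so assume $L \neq \emptyset$; then some $w\$$ belongs to $\Monoid[\Sigma]^\top(p(q_0),p(q_f))$, which forces $p(q_0) = \ast$ and $p(q_f) = \top$. Let $\Qcategory_0 = p^{-1}(\Monoid[\Sigma])$, the full subcategory of $\Qcategory$ spanned by the objects in the fiber $p^{-1}(\ast)$, and let $p_0 : \Qcategory_0 \to \Monoid[\Sigma]$ be the restriction of $p$. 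Since $\Monoid[\Sigma]$ is closed under precomposition in $\Monoid[\Sigma]^\top$, the functor $p_0$ is again ULF, and it is finitary because its fibers are fibers of $p$; by the earlier Proposition it corresponds to a bare NFA $M_0 = (\Sigma,Q_0,\TranFunc_0)$ on the state set $Q_0 = p^{-1}(\ast)$, in which a run $q \to q'$ over $w$ is exactly an arrow $q \to q'$ of $\Qcategory_0$ over $w$. Equip $M_0$ with initial state $q_0$ and set of accepting states $F = \set{q' \in Q_0 \mid \text{there is an arrow } q' \to q_f \text{ of } \Qcategory \text{ over } \$}$. Applying ULF to the factorization $w\$ = w \cdot \$$, every arrow $q_0 \to q_f$ of $\Qcategory$ over $w\$$ splits uniquely into an arrow $\beta : q_0 \to q'$ of $\Qcategory_0$ over $w$ (so that $q' \in F$) followed by an arrow over $\$$, and conversely any such data composes to a run; hence $L\$ = \Lang{M} = \set{p_0(\beta)\$ \mid q' \in F,\ \beta : q_0 \to q' \text{ in } \Qcategory_0}$, that is $L = L(M_0,q_0,F)$, a classical regular language.

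The conceptual content is modest; the work is in the bookkeeping. The main obstacle will be verifying stability of the finitary ULF property under both operations — adjoining $q_f$ in the forward direction and restricting to the subcategory over $\ast$ in the converse — and, most delicately, establishing the exact correspondence between runs $q_0 \to q_f$ of the category-automaton and runs of the classical automaton followed by a single $\$$-transition, which is precisely where the unique-lifting-of-factorizations property must be invoked and where off-by-one errors could creep in. The degenerate situations ($L$ empty, or matching the object types $p(q_0) = \ast$ and $p(q_f) = \top$) also have to be acknowledged, but become immediate once the role of the $\$$ marker is taken into account.
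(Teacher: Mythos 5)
Your proof is correct and follows essentially the same route the paper intends: the forward direction is exactly the construction of the Remark preceding the proposition (adjoin a single object $q_f$ over $\top$ and one arrow over $\$$ per accepting state, then check the finitary ULF property is preserved), and the converse is the evident inverse, restricting to the full subcategory over $\ast$ to recover a bare automaton via the earlier proposition and reading off the accepting states from the arrows over $\$$. The bookkeeping you flag (stability of finitary ULF under both operations, the unique splitting of a run over $w\$$ at the $\$$, and the degenerate case $L=\emptyset$ fixing $p(q_0)=\ast$, $p(q_f)=\top$) is handled correctly.
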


\subsection{Non-deterministic tree automata as finitary ULF functors over operads}\label{section/tree-automata}
One nice aspect of the fibrational approach to non-deterministic finite state automata
based on finitary ULF functors is that it adapts smoothly when one shifts from word automata to tree automata.
As a first step in that direction, we first describe how the ULF and finite fiber properties may be extended to functors of operads.
\begin{definition}\label{definition/ULF}
  A functor of operads $p:\Der\to\Base$ has the \defin{unique lifting of factorizations property} (or \defin{is ULF}) if any of the following equivalent conditions hold:
  \begin{enumerate}
  \item
    for any operation $\alpha$ of $\Der$, if $p(\alpha) = g \circ (h_1,\dots,h_n)$ for some operation $g$ and list of operations $h_1,\dots,h_n$ of $\Base$, there exists a unique operation $\beta$ and list of operations $\gamma_1,\dots,\gamma_n$ of $\Der$ such that $\alpha = \beta \circ (\gamma_1,\dots,\gamma_n)$ and $p(\beta) = g$ and $p(\gamma_1) = h_1, \dots, p(\gamma_n) = h_n$;
  \item
    for any operation $\alpha$ of $\Der$, if $p(\alpha) = g \circ_i h$ for some operations $g$ and $h$ of $\Base$ and index $i$, there exists a unique pair of operations $\beta$ and $\gamma$ of $\Der$ such that $\alpha = \beta \circ_i \gamma$ and $p(\beta) = g$ and $p(\gamma) = h$;
  \item
    the structure maps of the associated lax functor of operads $F:\Base\to\Span{\Set}$ discussed in \S\ref{section/benabou} are invertible.
  \end{enumerate}
\end{definition}
\begin{definition}
  We say that a functor of operads $p : \Qoperad \to \Ooperad$ is \defin{finitary} if either of the following equivalent conditions hold:
  \begin{itemize}
  \item the fiber $p^{-1}(A)$ as well as the fiber $p^{-1}(f)$ is finite for every color $A$ and operation $f$ of the operad $\Ooperad$;
  \item the associated lax functor of operads $F:\Ooperad\to\Span{\Set}$ factors via $\Span{\FinSet}$.
  \end{itemize}
\end{definition}
\noindent
One can check that the underlying bare automaton $M=(\Sigma,Q,\TranFunc)$ of any non-deterministic finite state tree automaton \cite{tata2008} 
gives rise to a finitary ULF functor of operads $p:\Qoperad\to\FreeOperad{\Sigma}$,
where $\FreeOperad{\Sigma}$ is the free operad generated by the ranked alphabet~$\Sigma$ (which may be seen as an uncolored non-symmetric species),
where the operad~$\Qoperad$ has states of the automaton as colors, and operations freely generated by $n$-ary nodes of the form $t : q_1,\dots,q_n \to q$ for every transition $t\in\TranSet$ of the form $\TranFunc(t)=(q_1,\dots,q_n,a,q)$
where $a$ is an $n$-ary letter in $\Sigma$,
and where $p$ transports every such $n$-ary transition $t : q_1,\dots,q_n \to q$ to the underlying
$n$-ary letter $a:\ast,\dots,\ast\to\ast$.
This motivates us to proceed as for word automata and propose a more general notion 
of finite state automaton over an arbitrary operad:

\begin{definition}\label{definition/tree-automaton}
  A \defin{non-deterministic finite state automaton over an operad} is given by a tuple
  $M = (\Ooperad, \Qoperad, p : \Qoperad \to \Ooperad, q)$
  consisting of two operads $\Ooperad$ and $\Qoperad$, a finitary ULF functor of operads $p:\Qoperad\to\Ooperad$,
  and a color $q$ of $\Qoperad$.
  A color of~$\Qoperad$ is called a \defin{state}, and an operation of~$\Qoperad$ is called a \defin{run tree} of the automaton~$p:\Qoperad\to\Ooperad$.
  The \defin{regular language of constants} $\Lang{M}$ recognized by the automaton
  is the set of constants $c$ in $\Ooperad$ that can be lifted along~$p$ to a constant $\alpha:q$ in $\Qoperad$, that is $\Lang{M} = \set{p(\alpha) \mid \alpha : q}$.
\end{definition}
\noindent
%
%

\subsection{From a word automaton to a tree automaton on spliced words}
We now state a simple property of ULF functors establishing a useful connection
between word and tree automata.
\begin{proposition}\label{proposition/Words-ULF}
Suppose that $p:\Qcategory\to\Ccategory$ is a functor of categories.
Then, if $p$ is ULF functor, so is the functor of operads~$\Words{p}:\Words{\Qcategory}\to\Words{\Ccategory}$.
Moreover, if $p$ is finitary then so is $\Words{p}$.
\end{proposition}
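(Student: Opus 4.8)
The plan is to verify the ULF property directly via condition (ii) of Definition~\ref{definition/ULF}, namely unique lifting of partial composites $g \circ_i h$, since this is the most economical characterization and it aligns perfectly with the way partial composition in $\Words{\Ccategory}$ is defined (substituting one spliced arrow into a single gap of another). Concretely, suppose $\alpha = v_0{-}\dots{-}v_m$ is an operation of $\Words{\Qcategory}$ with $\Words{p}(\alpha) = pv_0{-}\dots{-}pv_m$, and suppose this factors in $\Words{\Ccategory}$ as $g \circ_i h$ where $g = w_0{-}\dots{-}w_n$ and $h = u_0{-}\dots{-}u_k$ (so $m = n+k$). Unwinding the definition of $\circ_i$, the word $pv_j$ of $\alpha$ equals, for $j$ in the ``spliced-in'' range, a concatenation involving a boundary piece of $g$ and a piece of $h$; precisely $pv_i = w_i u_0$, $pv_{i+1} = u_1, \dots, pv_{i+k-1} = u_{k-1}$, $pv_{i+k} = u_k w_{i+1}$, and $pv_j = w_j$ (shifted appropriately) outside that range. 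The task is to lift $g$ and $h$ uniquely to spliced arrows $\beta$ and $\gamma$ of $\Words{\Qcategory}$ with $\alpha = \beta \circ_i \gamma$.

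First I would handle the arrows of $\alpha$ that lie entirely outside the grafting region: these must map under $\Words{p}$ to the corresponding $w_j$, so $\beta$ has no choice there and $\gamma$ does not touch them. The interesting arrows are the three boundary cases $pv_i = w_i u_0$, $pv_{i+k} = u_k w_{i+1}$, and the interior arrows $pv_{i+1},\dots,pv_{i+k-1}$, which must be the liftings $u_1,\dots,u_{k-1}$ of $\gamma$ verbatim (again forced). For the two boundary arrows, I invoke the ULF property of $p:\Qcategory\to\Ccategory$ itself: since $p(v_i) = w_i u_0$ is a composite in $\Ccategory$, there is a \emph{unique} factorization $v_i = \beta_i \gamma_0$ in $\Qcategory$ with $p(\beta_i) = w_i$ and $p(\gamma_0) = u_0$; symmetrically $p(v_{i+k}) = u_k w_{i+1}$ yields a unique $v_{i+k} = \gamma_k \beta_{i+1}$ with $p(\gamma_k) = u_k$, $p(\beta_{i+1}) = w_{i+1}$. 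One then has to check these pieces assemble into well-typed spliced arrows $\beta = w'_0{-}\dots{-}w'_n$ and $\gamma = u'_0{-}\dots{-}u'_k$ over $g$ and $h$ respectively --- the typing constraints of Definition~\ref{definition/spliced-arrows} match up precisely because the colors $(A,B)$ in $\Words{\Ccategory}$ are pairs of objects and $p$ acts on objects, so the intermediate objects witnessing the factorizations in $\Qcategory$ supply exactly the boundary objects needed for the colors of $\beta$ and $\gamma$. Uniqueness of $(\beta,\gamma)$ is then immediate: every component of $\beta$ and $\gamma$ was forced, either trivially or by the uniqueness clause of ULF for $p$.

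For the finitary claim, I would use the fiber characterization: the fiber $\Words{p}^{-1}(A,B)$ over a color $(A,B)$ of $\Words{\Ccategory}$ is just $\{(A',B') \mid p(A') = A,\ p(B') = B\} = p^{-1}(A) \times p^{-1}(B)$, finite since $p$ is finitary. The fiber over an operation $w_0{-}\dots{-}w_n$ is the set of spliced arrows $v_0{-}\dots{-}v_n$ of $\Words{\Qcategory}$ with $p(v_j) = w_j$ and matching endpoints; since each $v_j$ ranges over the finite set $p^{-1}(w_j)$, and there are finitely many ways to choose compatible colors at the boundaries (each in a finite fiber $p^{-1}(-)$ of an object), this fiber is finite as well. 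I expect the main obstacle to be purely bookkeeping: carefully tracking the index shifts in the definition of $\circ_i$ and confirming that the object-level data from the factorizations in $\Qcategory$ line up with the color discipline on spliced arrows, rather than any conceptual difficulty --- the whole statement is essentially ``$\Words{-}$ transports the one-dimensional ULF property to the operadic one because partial composition of spliced arrows reduces to composition in the base category at each gap.''
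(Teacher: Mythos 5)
The paper states this proposition without proof, treating it as routine, so there is no official argument to compare against; your direct verification via condition~(ii) of Definition~\ref{definition/ULF} is exactly the expected one, and it is essentially correct: the only non-forced data in a lifting of $g \circ_i h$ are the two boundary factorizations, and these are handled by the ULF property of $p$ itself, with the typing of the intermediate objects matching up as you say. Two small points to tidy. First, an off-by-one: if $g$ is $n$-ary and $h$ is $k$-ary then $g \circ_i h$ is $(n+k-1)$-ary, so $m = n+k-1$ (giving $n+k$ words), not $m = n+k$. Second, your case analysis implicitly assumes $k \ge 1$; when $h$ is a constant $u_0$, the two boundary positions coincide and the single word at position $i$ satisfies $p(v_i) = w_i u_0 w_{i+1}$, so you need the unique lifting of a \emph{ternary} factorization, obtained by applying the ULF property of $p$ twice (uniqueness of the ternary lift follows from uniqueness of the two binary lifts via associativity). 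Neither issue affects the substance, and the finitary part is correct as written.
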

\noindent
From this it follows that every non-deterministic finite state automaton $M = (\Ccategory,\Qcategory,p,q_0,q_f)$ over a given category $\Ccategory$ induces a non-deterministic finite state automaton $\Words{M} = (\Words{\Ccategory},\Words{\Qcategory},\Words{p},(q_0,q_f))$ over the spliced arrow operad~$\Words{\Ccategory}$.
%
%
Moreover, it is immediate that $\Lang{M}=\Lang{\Words{M}}$ since the constants of $\Words{\Ccategory}$ are exactly the arrows of $\Ccategory$.
As we will see, these observations play a central role in our understanding of the Chomsky and Schützenberger
representation theorem~\cite{ChomskySchuetzenberger1963}.
Finally, let us emphasize that the notion of finite state automaton over an operad
is really a proper generalisation of the classical notion of tree automaton since it allows taking a
non-free operad as target, and in particular the non-free operad of sliced arrows (see Remark~\ref{remark/non-free}).
This is what enables us to transform an automaton on the arrows of $\Ccategory$ into an automaton on the operations of the spliced arrow operad $\Words{\Ccategory}$, which could be seen as a kind of tree automaton over ``trees that bend'' (cf.~Fig.~\ref{fig:spliced-arrows}).

\section{The Chomsky-Schützenberger Representation Theorem}\label{section/CSR-theorem}



\subsection{Pulling back context-free grammars along finite state automata}\label{section/pulling-back}
\begin{proposition}\label{proposition/pullback-along-ULF}
Suppose given a species~$\Sspecies$, a functor of operads ${p:\FreeOperad{\Sspecies}\to\Ooperad}$
and a ULF functor of operads $p_{\Qoperad}:\Qoperad\to\Ooperad$.
In that case, the pullback of $p$ along $p_{\Qoperad}$ in the category of operads is obtained from
a corresponding pullback of $\phi : \Sspecies \to \ForgetOperad{\Ooperad}$ along $\ForgetOperad{p_{\Qoperad}} : \ForgetOperad{\Qoperad}\to\ForgetOperad{\Ooperad}$ in the category of species:
\begin{equation}\label{equation/pullback-of-operads-species}
\begin{tikzcd}[column sep=1em,row sep=.4em]
\FreeOperad{\Sspecies'}\arrow[dd,"{p'}"{swap}]\arrow[rr,"\FreeOperad{\psi}"] && \FreeOperad{\Sspecies}\arrow[dd,"p"]
\\
& pullback &
\\
\Qoperad\arrow[rr,"{p_{\Qoperad}}"{swap}] && \Ooperad
\end{tikzcd}
\qquad
\begin{tikzcd}[column sep=2em,row sep=.4em]
{\Sspecies'}\arrow[dd,"{\phi'}"{swap}]\arrow[rr,"\psi"] && {\Sspecies}\arrow[dd,"\phi"]
\\
& pullback &
\\
{\ForgetOperad{\Qoperad}}\arrow[rr,"{\ForgetOperad{p_{\Qoperad}}}"{swap}] && {\ForgetOperad{\Ooperad}}
\end{tikzcd}
\end{equation}
\end{proposition}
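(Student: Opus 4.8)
The plan is to construct $\Sspecies'$ as the pullback of $\phi$ along $\ForgetOperad{p_\Qoperad}$ in $\Species{}$ --- this exists and is computed on colors and on nodes exactly as a pullback in $\Set$ --- and then to show that the free operad $\FreeOperad{\Sspecies'}$, equipped on the one hand with the functor $\FreeOperad{\psi}$ coming from functoriality of $\FreeOperad{(-)}$ and on the other hand with the functor $p' : \FreeOperad{\Sspecies'} \to \Qoperad$ transposing the map of species $\phi' : \Sspecies' \to \ForgetOperad{\Qoperad}$ under the adjunction \eqref{equation/adjunction-species-operad}, is a pullback of $p$ along $p_\Qoperad$ in $\Operads{}$. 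Commutativity $p'\, p_\Qoperad = \FreeOperad{\psi}\, p$ of the outer operad square comes for free: both sides are functors out of the free operad $\FreeOperad{\Sspecies'}$ and are therefore determined by their restriction to the generating species $\Sspecies'$, and there they agree because the species square $\phi'\, \ForgetOperad{p_\Qoperad} = \psi\, \phi$ commutes by construction of $\Sspecies'$.

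Pullbacks in $\Operads{}$ are computed on colors and on operations just as in $\Set$ (this is easily checked directly, the operad structure being inherited pointwise), so it suffices to show that the canonical comparison functor $k : \FreeOperad{\Sspecies'} \to \FreeOperad{\Sspecies} \times_\Ooperad \Qoperad$ into the genuine pullback is an isomorphism. On colors $k$ is the identity, both sides being the pullback of color sets. On operations I would construct an inverse by \emph{decoration}: to any pair $(\alpha,\beta)$ of an operation $\alpha$ of $\FreeOperad{\Sspecies}$ and an operation $\beta$ of $\Qoperad$ with $p(\alpha) = p_\Qoperad(\beta)$, assign an operation $\langle\alpha,\beta\rangle$ of $\FreeOperad{\Sspecies'}$ by recursion on $\alpha$ viewed as an element of the initial $W_\Sspecies$-algebra. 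If $\alpha = \id[R]$ then $p_\Qoperad(\beta) = \id[\phi(R)]$; since $p_\Qoperad$ is ULF, its fibre over an identity operation of $\Ooperad$ consists only of identities (the unit structure map of the associated pseudofunctor being invertible), so $\beta = \id[q]$ for a unique color $q$ over $\phi(R)$, and we set $\langle\id[R],\id[q]\rangle := \id[(R,q)]$. If $\alpha = x \circ (\alpha_1,\dots,\alpha_n)$ for a node $x$ of $\Sspecies$, then $p_\Qoperad(\beta) = p(\alpha) = \phi(x)\circ(p(\alpha_1),\dots,p(\alpha_n))$, so by the ULF property there is a unique decomposition $\beta = \beta_0 \circ (\beta_1,\dots,\beta_n)$ with $p_\Qoperad(\beta_0) = \phi(x)$ and $p_\Qoperad(\beta_i) = p(\alpha_i)$; then $(x,\beta_0)$ is a node of $\Sspecies'$ and we set $\langle\alpha,\beta\rangle := (x,\beta_0)\circ(\langle\alpha_1,\beta_1\rangle,\dots,\langle\alpha_n,\beta_n\rangle)$. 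This is well-defined since the $W_\Sspecies$-decomposition of $\alpha$ is unique and hence, by the ULF property, so is the induced decomposition of $\beta$; a routine induction then gives $\FreeOperad{\psi}(\langle\alpha,\beta\rangle) = \alpha$ and $p'(\langle\alpha,\beta\rangle) = \beta$, and that $\langle\alpha,\beta\rangle$ is the \emph{only} operation of $\FreeOperad{\Sspecies'}$ with those two images --- so $\langle-,-\rangle$ and $k$ are mutually inverse on operations.

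The step I expect to be the main obstacle is checking that the decoration $(\alpha,\beta)\mapsto\langle\alpha,\beta\rangle$ really defines a \emph{functor of operads} $\FreeOperad{\Sspecies}\times_\Ooperad\Qoperad \to \FreeOperad{\Sspecies'}$, i.e.\ that it preserves identities and composition. Preservation of identities is the base case above. Preservation of composition reduces to a grafting-compatibility identity for the decoration, $\langle\alpha\circ(\alpha'_1,\dots,\alpha'_m),\,\beta\circ(\beta'_1,\dots,\beta'_m)\rangle = \langle\alpha,\beta\rangle\circ(\langle\alpha'_1,\beta'_1\rangle,\dots,\langle\alpha'_m,\beta'_m\rangle)$, proved by induction on $\alpha$; the inductive step requires recognizing the evident decomposition of a grafted tree as \emph{the} ULF-decomposition appearing in the definition of the left-hand decoration (this is where the uniqueness clause of the ULF property is used) and then invoking the induction hypothesis subtree by subtree. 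This is routine but bookkeeping-heavy, and is precisely where the ULF hypothesis on $p_\Qoperad$ is indispensable.

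More conceptually, one can instead run the argument through the displayed-operad correspondence of \S\ref{section/benabou}: pulling a displayed operad back along an operad functor corresponds to precomposing its lax functor, so the pullback of $p$ along $p_\Qoperad$ is the displayed operad over $\Qoperad$ whose lax functor is $F\circ p_\Qoperad$, where $F : \Ooperad \to \Span{\Set}$ computes the fibres of $p$. Using the inductive formula \eqref{equation/magic-formula} for $F$, the bijection supplied by the ULF property between decompositions of an operation $g$ of $\Qoperad$ and decompositions of $p_\Qoperad(g)$ in $\Ooperad$, and the pullback identity $(\phi')^{-1}(g)\cong\phi^{-1}(p_\Qoperad(g))$ on node-fibres, one checks that $F\circ p_\Qoperad$ satisfies the same inductive characterization as the lax functor of $p' : \FreeOperad{\Sspecies'}\to\Qoperad$; so the two displayed operads, together with their legs to $\FreeOperad{\Sspecies}$, coincide.
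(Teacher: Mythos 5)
Your proposal is correct and its core argument --- constructing the inverse to the comparison functor by recursion on $\alpha$ as an element of the initial $W_\Sspecies$-algebra, with the identity case handled by unique ULF lifting of identities and the node case by the unique ULF decomposition $\alpha' = \beta'\circ(\gamma'_1,\dots,\gamma'_n)$ over $\phi(x)\circ(p(\gamma_1),\dots,p(\gamma_n))$ --- is exactly the paper's proof. The extra care you take over functoriality of the decoration map, and the alternative route via the displayed-operad formula \eqref{equation/magic-formula}, are details the paper leaves implicit but do not change the approach.
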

\noindent
This observation may be applied to pull back a context-free grammar~$G=(\Ccategory,\Sspecies,S,p)$ of arrows in a category~$\Ccategory$,
along a non-deterministic finite state automaton $M = (\Ccategory,\Qcategory,p_M:\Qcategory_M\to\Ccategory,q_0,q_f)$ over the same category.
The construction is performed by first
considering the ULF functor of operads of spliced arrows
${\Words{p_M}} : \Words{\Qcategory}\to \Words{\Ccategory}$
deduced from the ULF functor of categories~$p_M$ using Prop.~\ref{proposition/Words-ULF}.
We therefore have a pullback diagram of the form~\eqref{equation/pullback-of-operads-species}
in the category of operads
for $p_\Qoperad = \Words{p_M}$
where the species~$\Sspecies'$ and the map of species~$\phi' : \Sspecies' \to \ForgetOperad{\Words{\Qcategory}}$ determining $p'$
are computed by a pullback in the category of species.
This pullback admits a concrete description:
the colors of~$\Sspecies'$ are triples $(q,R,q')$ where $p(R)=(p_M(q),p_M(q'))$
and its $n$-ary nodes $(q_1,R_1,q_1'),\dots,(q_n,R_n,q_n')\to(q,R,q')$
are pairs $(x,\alpha)$ of a $n$-ary node $x: R_1,\dots,R_n\to R$ of the species~$\Sspecies_G$
together with a $n$-ary spliced arrow $\alpha={\alpha_0}{-}\dots{-}{\alpha_n}:(q_1,q_1'),...,(q_n,q_n') \to (q,q')$
in~$\Words{\Qcategory}$ such that $p_M(\alpha)=p(x)$, while the map of species~$\phi'$ transports
a color $(q,R,q')$ to the color $(q,q')$ of~$\Words{\Qcategory}$ and a $n$-ary node $(x,\alpha)$ 
to the $n$-ary operation~$\alpha$.
Since $\Sspecies$ is finite and the functor $p_M$ has finite fibers, the species $\Sspecies'$ is also finite.
To complete the construction, the pullback grammar $G'=(\Qcategory,\Sspecies',S',p')$ 
is defined by taking the color $S'=(q_0,S,q_f)$ of the species~$\Sspecies'$ as start symbol.
Note that $G'$ is a context-free grammar over the arrows of $\Qcategory_M$, which correspond to runs of the automaton~$M$.
In traditional syntax of context-free grammars, we could describe it as having a production rule
$(q,R,q')\to\alpha_0(q_1,R_1,q'_1)\alpha_1\dots (q_n,R_n,q'_n)\alpha_n$
for every production rule $R\to w_0R_1w_1\dots R_n w_n$ of the original grammar~$G$ 
and sequence of $n+1$ runs of the automaton $\alpha_0:q\to q_1$, $\alpha_1:q'_1\to q_2$, $\dots$, $\alpha_n:q'_{n}\to q'$ over the respective words $w_0,\dots,w_n$.

We can then also derive a grammar~$G'' = p_M(G')$ of arrows in $\Ccategory$ by taking 
the functorial image (Prop.~\ref{proposition/basic-closure}(iii)) of~$G'$ along the functor~$p_M:\Qcategory\to\Ccategory$, which by construction will generate the intersection of the context-free language of $G$ and the regular language of $M$.
\begin{proposition}\label{proposition/intersection-of-languages}
For $G'$ and $G''$ defined as above, we have $\Lang{G'} = p_M^{-1}(\Lang{G}) \cap \Qcategory(q_0,q_f)$ and $\Lang{G''} = \Lang{G} \cap \Lang{M}$.
\end{proposition}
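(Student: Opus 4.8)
The plan is to unwind the definitions and rely on the pullback description already given, together with the naturality of the language-of-arrows construction under functorial image. I would establish the two claims in order, since the second follows from the first.

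For $\Lang{G'} = p_M^{-1}(\Lang{G}) \cap \Qcategory(q_0,q_f)$: by Definition~\ref{definition/cfg-over-cat}, $\Lang{G'} = \set{p'(\alpha) \mid \alpha : S'}$ where $\alpha$ ranges over constants of color $S' = (q_0,S,q_f)$ in $\FreeOperad{\Sspecies'}$, and $p' : \FreeOperad{\Sspecies'} \to \Words{\Qcategory}$. First I would note that $p'(\alpha)$, being a constant of $\Words{\Qcategory}$ of color $(q_0,q_f)$, is an arrow of $\Qcategory$ in $\Qcategory(q_0,q_f)$, so the right-to-left containment in the intersection is automatic and the content is in matching $p_M^{-1}(\Lang{G})$. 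The key step is to use the fact that \eqref{equation/pullback-of-operads-species} is a pullback of operads together with the universal property: a constant $\alpha : S'$ in $\FreeOperad{\Sspecies'}$ is exactly a pair of a constant $\beta : S$ in $\FreeOperad{\Sspecies}$ and a constant $\gamma : (q_0,q_f)$ in $\Words{\Qcategory}$ (i.e.\ an arrow $\gamma : q_0 \to q_f$ of $\Qcategory$) such that $p(\beta) = \Words{p_M}(\gamma) = p_M(\gamma)$ as constants of $\Words{\Ccategory}$, i.e.\ $p_M(\gamma) = p(\beta) \in \Lang{G}$; and under this correspondence $p'(\alpha) = \gamma$. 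Hence $\gamma \in \Lang{G'}$ iff $\gamma \in \Qcategory(q_0,q_f)$ and $p_M(\gamma) \in \Lang{G}$, which is precisely $p_M^{-1}(\Lang{G}) \cap \Qcategory(q_0,q_f)$.

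For $\Lang{G''} = \Lang{G} \cap \Lang{M}$: here $G'' = p_M(G')$ is the functorial image along $p_M : \Qcategory \to \Ccategory$, so by Proposition~\ref{proposition/basic-closure}(iii) applied together with the fact (used in its proof) that $\Lang{p_M(G')} = p_M(\Lang{G'})$, we get $\Lang{G''} = p_M(\Lang{G'}) = p_M\bigl(p_M^{-1}(\Lang{G}) \cap \Qcategory(q_0,q_f)\bigr)$. It then remains to check that this equals $\Lang{G} \cap \Lang{M}$. For ``$\subseteq$'': any element is $p_M(\gamma)$ with $\gamma : q_0 \to q_f$ and $p_M(\gamma) \in \Lang{G}$, and $p_M(\gamma) \in \Lang{M}$ by Definition of the regular language of arrows, so $p_M(\gamma) \in \Lang{G} \cap \Lang{M}$. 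For ``$\supseteq$'': if $w \in \Lang{G} \cap \Lang{M}$ then $w \in \Lang{M}$ gives some $\gamma : q_0 \to q_f$ with $p_M(\gamma) = w$, and since $w \in \Lang{G}$ we have $\gamma \in p_M^{-1}(\Lang{G}) \cap \Qcategory(q_0,q_f)$, so $w = p_M(\gamma) \in p_M(p_M^{-1}(\Lang{G}) \cap \Qcategory(q_0,q_f)) = \Lang{G''}$.

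The main obstacle is the first claim's key step: justifying that constants of color $S'$ in $\FreeOperad{\Sspecies'}$ correspond bijectively to compatible pairs of constants in $\FreeOperad{\Sspecies}$ and $\Words{\Qcategory}$, with $p'$ acting as the second projection on the nose. This is where the ULF hypothesis on $p_M$ (hence on $\Words{p_M}$, by Proposition~\ref{proposition/Words-ULF}) is essential: it is what makes Proposition~\ref{proposition/pullback-along-ULF} applicable, guaranteeing that the pullback of operads is computed at the level of species and that the fibers of $p'$ over constants of $\Words{\Qcategory}$ are genuinely the fibers of $p$ over their $p_M$-images — this can be read off the concrete description of $\Sspecies'$ given just before the statement, or more abstractly from the formula \eqref{equation/magic-formula0} applied to $p'$, which the pullback square identifies with the $\Words{p_M}$-reindexing of the formula for $p$. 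Everything else is bookkeeping with the definitions of $\Lang{-}$.
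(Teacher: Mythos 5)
Your proof is correct and is essentially the argument the paper intends: the paper states this proposition without a separate proof, asserting that $G''$ ``by construction'' generates the intersection, and your write-up is exactly that construction made explicit --- using Prop.~\ref{proposition/pullback-along-ULF} to identify constants of color $S'$ in $\FreeOperad{\Sspecies'}$ with compatible pairs of a constant $\beta : S$ and an arrow $\gamma : q_0 \to q_f$ with $p(\beta) = p_M(\gamma)$, and then pushing forward along $p_M$ via Prop.~\ref{proposition/basic-closure}(iii). No gaps.
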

\begin{corollary}
  Context-free languages of arrows are closed under pullback along non-deterministic finite state automata,
  and under intersection with regular languages.
\end{corollary}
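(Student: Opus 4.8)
The plan is to obtain both closure properties by specializing the pullback construction carried out above --- for a grammar $G = (\Ccategory,\Sspecies,S,p)$ and an automaton $M = (\Ccategory,\Qcategory_M,p_M,q_0,q_f)$ over the same category --- and reading off the languages via Proposition~\ref{proposition/intersection-of-languages}. Most of the work is already done; what remains is to check that the grammars produced really are context-free grammars of arrows and to invoke the appropriate earlier results.

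For closure under pullback along an automaton, let $L \subseteq \Ccategory(A,B)$ be a context-free language of arrows witnessed by $G$ with $S \refs (A,B)$, and let $M$ be a non-deterministic finite state automaton over $\Ccategory$; without loss of generality $p_M(q_0) = A$ and $p_M(q_f) = B$, as otherwise the pullback is empty. First I would form $\Words{p_M} : \Words{\Qcategory_M} \to \Words{\Ccategory}$, which is ULF and finitary by Proposition~\ref{proposition/Words-ULF}. Proposition~\ref{proposition/pullback-along-ULF} then exhibits the pullback of $p$ along $\Words{p_M}$ as a functor $p' : \FreeOperad{\Sspecies'} \to \Words{\Qcategory_M}$ out of a free operad, with $\Sspecies'$ obtained as a pullback in $\Species{}$ of $\phi$ along $\ForgetOperad{\Words{p_M}}$. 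The one point needing verification is that $\Sspecies'$ is finite: from the concrete description of $\Sspecies'$ (its colors are triples $(q,R,q')$ and its nodes are pairs $(x,\alpha)$), finiteness of $\Sspecies$ together with finiteness of all fibers of $\Words{p_M}$ makes both the color set and the node set finite. Hence $G' = (\Qcategory_M,\Sspecies',S',p')$ with $S' = (q_0,S,q_f)$ is a genuine context-free grammar of arrows over $\Qcategory_M$, and by Proposition~\ref{proposition/intersection-of-languages} we have $\Lang{G'} = p_M^{-1}(L) \cap \Qcategory_M(q_0,q_f)$, which is precisely the pullback of $L$ along $M$.

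For closure under intersection with a regular language, let $L_1 \subseteq \Ccategory(A,B)$ be context-free, witnessed by $G$, and let $L_2 \subseteq \Ccategory(A,B)$ be regular, so $L_2 = \Lang{M}$ for some automaton $M$ over $\Ccategory$ with $p_M(q_0) = A$ and $p_M(q_f) = B$. (If $L_1$ and $L_2$ were presented as subsets of a priori different hom-sets, their intersection would be empty unless domains and codomains agree, so assuming a common gap type costs nothing.) Running the construction above yields $G'$, and I would then pass to the functorial image $G'' = p_M(G')$, which is a context-free grammar of arrows in $\Ccategory$ by Proposition~\ref{proposition/basic-closure}(iii). The second equation of Proposition~\ref{proposition/intersection-of-languages} gives $\Lang{G''} = \Lang{G} \cap \Lang{M} = L_1 \cap L_2$, so the intersection is context-free.

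I do not expect a real obstacle: the corollary is essentially a corollary, assembled from Propositions~\ref{proposition/Words-ULF}, \ref{proposition/pullback-along-ULF}, \ref{proposition/basic-closure}(iii), and \ref{proposition/intersection-of-languages}. The only genuinely load-bearing check is that the finite-state hypothesis on $M$ (equivalently, finitariness of $p_M$, hence of $\Words{p_M}$) is exactly what guarantees that $\Sspecies'$ is a \emph{finite} species and therefore that $G'$ qualifies as a context-free grammar of arrows; the rest is bookkeeping to keep the base objects matched so that the inverse images and intersections are taken in the intended hom-sets.
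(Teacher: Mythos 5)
Your proposal is correct and follows essentially the same route as the paper: form $\Words{p_M}$ (ULF and finitary by Prop.~\ref{proposition/Words-ULF}), pull back via Prop.~\ref{proposition/pullback-along-ULF}, observe that finiteness of $\Sspecies$ together with the finite fibers of $p_M$ makes $\Sspecies'$ finite, and then read off the two languages from Prop.~\ref{proposition/intersection-of-languages}, using Prop.~\ref{proposition/basic-closure}(iii) for the functorial image $G''$. Your explicit attention to the finiteness of $\Sspecies'$ and to matching the gap types is exactly the load-bearing bookkeeping the paper relies on.
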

\begin{example}\label{example/parsing-intervals}
  For any word $w = a_1 \dots a_n$ of length $n$, there is an $(n+1)$-state automaton $M_w$ that recognizes the singleton language $\set{w}$, with initial state $0$, accepting state $n$, and transitions of the form $(i,a_{i+1},i+1)$ for each $0 \le i < n$.
  By pulling back any context-free grammar $G$ along $M_w$, we obtain a new grammar that may be seen as a specialization of $G$ to the word $w$, with non-terminals $(i,R,j)$ representing the fact that the subword $w_{i,j} = a_{i+1}\dots a_j$ parses as $R$ (cf.~\S\ref{section/application-to-parsing}).
  This example generalizes to context-free grammars of arrows over any category $\Ccategory$ with the property that every arrow $w$ has only finitely many factorizations $w = uv$ of length 2, by observing that the underlying bare automaton of $M_w$ is isomorphic to the \emph{interval category} \cite{LawvereMenni2010} of $w$.
  In general, for any arrow $w : A \to B$ of a category $\Ccategory$, the interval category $\interval{w}$ is defined by taking objects to be triples $(X,u,v)$ of an object $X \in \Ccategory$ and a pair of arrows $u : A \to X$, $v : X \to B$ such that $w = uv$, and arrows  $(X,u,v) \to (X',u',v')$ to be arrows $x : X \to X'$ such that $u' = ux$ and $v = xv'$.
  The interval category $\interval{w}$ has an initial object $(\id[A],w)$ and a terminal object $(w,\id[B])$, and it comes equipped with an evident forgetful functor $\interval{w} \to \Ccategory$, which is always ULF, and moreover finitary by the stated condition on $\Ccategory$.
  The tuple $M_w = (\Ccategory,\interval{w}, \interval{w} \to \Ccategory, (\id[A],w), (w,\id[B]))$ therefore defines a finite-state automaton, and any CFG of arrows over $\Ccategory$ can be pulled back along $M_w$ to obtain a CFG specialized to the arrow $w$.
\end{example}

\subsection{The contour category of an operad and the contour / splicing adjunction}
\label{section/contour-category}

In \S\ref{section/spliced-arrows}, we saw how to construct a functor
\[ \Words{-} : \Cat \to \Operads{} \]
transforming any category $\Ccategory$ into an operad $\Words{\Ccategory}$ of spliced arrows of arbitrary arity, which played a central role in our definition of context-free language of arrows in a category.
We construct now a left adjoint functor
\[
  \Contour{-} : \Operads{} \to \Cat
\]
which extracts from any given operad $\Ooperad$ a category $\Contour{\Ooperad}$ whose arrows correspond to ``oriented contours'' along the boundary of the operations of the operad.

\begin{definition}\label{definition/contour-category}
The \defin{contour category $\Contour{\Ooperad}$} of an operad $\Ooperad$ is defined as a quotient of the following free category:
\begin{itemize}
\item objects are given by \emph{oriented colors} $R^\epsilon$ consisting of a color $R$ of $\Ooperad$ and an orientation $\epsilon \in \set{\tagU,\tagD}$ (``up'' or ``down'');
\item arrows are generated by pairs $(f,i)$ of an operation $f : R_1,\dots,R_n \to R$ of $\Ooperad$ and an index $0 \le i \le n$, defining an arrow $R_i^\tagD \to R_{i+1}^\tagU$ under the conventions that $R_0^\tagD = R^\tagU$ and $R_{n+1}^\tagU = R^\tagD$;
\end{itemize}
subject to the conditions that $\id[R^\tagU] = (\id[R], 0)$ and $\id[R^\tagD] = (\id[R], 1)$ as well as the following equations:
\begin{align}
  (f \circ_i g, j)  &= \begin{cases}
                         (f,j) & j < i  \\ 
                         (f,i)(g,0)\hphantom{(f,i+1)} & j = i \\
                         (g,j-i) & i < j < i+m \\
                         (g,m)(f,i+1) & j = i+m \\
                         (f,j-m+1) & j > i+m
                       \end{cases} \label{equation/contour1}\\ 
  (f \circ_i c, j) &= \begin{cases}
                         (f,j) & j < i \\
                         (f,i)(c,0)(f,i+1) & j = i \\
                         (f,j+1) & j > i
                       \end{cases} \label{equation/contour2}
\end{align}
whenever the left-hand side is well-formed,
for every operation $f$, operation $g$ of positive arity $m > 0$, constant $c$,
and indices $i$ and $j$ in the appropriate range.

We refer to each generating arrow $(f,i)$ of the contour category $\Contour{\Ooperad}$ as a \defin{sector} of the operation~$f$.
See Fig.~\ref{fig:contour-category} for a graphical interpretation of sectors and of the equations on contours seen as compositions of sectors.
\end{definition}
\begin{figure}
  \begin{center}\includegraphics[width=\textwidth]{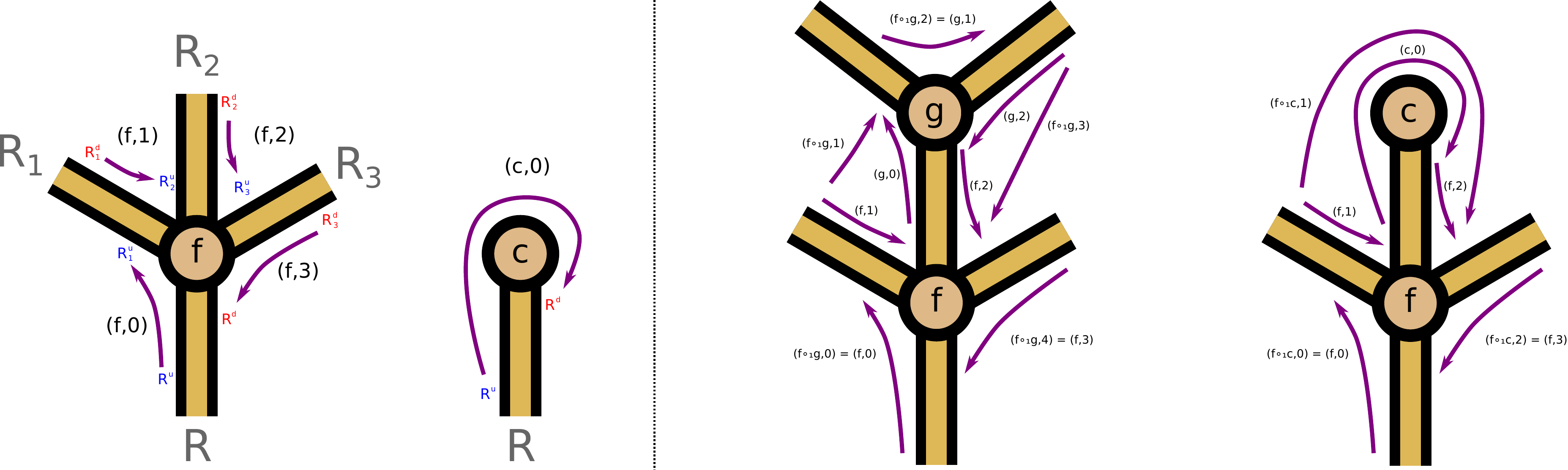}\end{center}
  \caption{Left: interpretation of the generating arrows of the contour category $\Contour{\Ooperad}$. Right: interpretation of equations \eqref{equation/contour1} and \eqref{equation/contour2}.}
  \label{fig:contour-category}
\end{figure}
\begin{remark}
  In the case of a free operad over a species $\Sspecies$, we also write $\Contour{\Sspecies}$ for the contour category $\Contour{\FreeOperad{\Sspecies}}$
  because it admits an even simpler description as a free category generated by the arrows $(x,i) : R_i^\tagD \to R_{i+1}^\tagU$ for every node $x : R_1,\dots,R_n \to R$ of the species $\Sspecies$.
  We refer to these generating arrows $(x,i)$ consisting of an $n$-ary node $x$ and an index $0 \le i \le n$ as \defin{corners} since they correspond to the corners 
  of $\Sspecies$-rooted trees seen as rooted planar maps \cite{Schaeffer2015map}.
  Note that every sector of an operation of $\FreeOperad{\Sspecies}$ factors uniquely in the contour category~$\Contour{\Sspecies}$ as a sequence of corners.
  Thinking of the nodes of $\Sspecies$ as the production rules of a context-free grammar, the corners $(x,i)$ correspond exactly to the \emph{items} used in LR parsing and Earley parsing \cite{Knuth1965,Earley1970}.
\end{remark}

The contour construction provides a left adjoint to the spliced arrow construction
because a functor of operads
$\Ooperad \to \Words{\Ccategory}$
is entirely described by the data of a pair of objects $(A,B) = (R^\tagU, R^\tagD)$ in $\Ccategory$ for every color $R$ in $\Ooperad$ together with a sequence $f_0, f_1, \dots, f_n$ of $n+1$ arrows in $\Ccategory$, where $f_i : R_i^\tagD \to R_{i+1}^\tagU$ for $0 \le i \le n$ for each operation $f : R_1,\dots,R_n \to R$ of $\Ooperad$, under the same conventions as above.
The equations \eqref{equation/contour1} and \eqref{equation/contour2} on the generators of $\Contour{\Ooperad}$ reflect the equations imposed by the functor of operads $\Ooperad \to \Words{\Ccategory}$ on the spliced arrows of $\Ccategory$ appearing as the image of operations in $\Ooperad$.
In that way we transform any functor of operads $\Ooperad \to \Words{\Ccategory}$ into a functor
$\Contour{\Ooperad} \to \Ccategory$
which may be seen as an interpretion of the contours of the operations of $\Ooperad$ in $\Ccategory$.

The unit and counit of the contour / splicing adjunction also have nice descriptions.
The unit of the adjunction defines, for any operad $\Ooperad$, a functor of operads $\Ooperad \to \Words{\Contour{\Ooperad}}$ that acts on colors by $R \mapsto (R^\tagU, R^\tagD)$,
and on operations by sending an operation $f : R_1,\dots,R_n \to R$ of $\Ooperad$ to the spliced word of sectors $(f,0){-}\dots{-}(f,n) : (R_1^\tagU,R_1^\tagD),\dots,(R_n^\tagU,R_n^\tagD) \to (R^\tagU,R^\tagD)$.
The counit of the adjunction defines, for any category $\Ccategory$, a functor of categories $\Contour{\Words{\Ccategory}} \to \Ccategory$ that acts on objects by $(A,B)^\tagU \mapsto A$ and $(A,B)^\tagD \mapsto B$, and on arrows by sending the $i$th sector of a spliced word to its $i$th word, $(w_0{-}\dots{-}w_n,i) \mapsto w_i$.


In contrast to the situation of Prop.~\ref{proposition/Words-ULF}, it is \emph{not} the case that $\Contour{-}$ always preserves the ULF property.
\begin{remark}
Consider the category $\mathbf{2}$ with two objects $A$ and $B$ and only identity arrows,
and the unique functor $p$ to the terminal category $\mathbf{1}$.
We claim that the associated ULF functor of operads $\Words{p}$ induces a functor of categories $\Contour{\Words{p}}$ which is not ULF.
%
Consider the two binary operations $f = \id[A]{-}\id[A]{-}\id[A]$ and $g = \id[A]{-}\id[A]{-}\id[B]$
and the constant $c = \id[A]$ in $\Words{\mathbf{2}}$,
as well as the binary operations $h=\id[\ast]{-}\id[\ast]{-}\id[\ast]$ and the constant $d=\id[\ast]$
in $\Words{\mathbf{1}}$.
The category~$\Contour{\Words{{\mathbf{2}}}}$ has the sequence of sectors $\alpha = (f,0)(c,0)(g,1)$ as an arrow, which
is different from the identity. 
On the other hand, it is mapped by $\Contour{\Words{p}}$ to the sequence $w = (h,0)(d,0)(h,1)$, which is equal thanks to Equation~\eqref{equation/contour2}
to the sector $(h\circ_0 d,0)$ of the unary operation~$h\circ_0 d = \id[\ast]{-}\id[\ast]$ of $\Words{\mathbf{1}}$,
and hence $w = \id[(\ast,\ast)^\tagU]$.
Since the factorization $\id = \id \id$ in $\Words{\mathbf{1}}$ lifts to two distinct factorizations $\alpha = \id \alpha = \alpha \id$ in $\Words{\mathbf{2}}$, $p$ is not ULF.
\end{remark}
\noindent
Still, we can verify that maps of species induce ULF functors between their contour categories.
\begin{proposition}\label{proposition/Contour-species-ULF}
If $\psi:\Sspecies\to \Rspecies$ is a map of species, then $\Contour{p} : \Contour{\Sspecies} \to \Contour{\Rspecies}$ is a ULF functor of categories.
\end{proposition}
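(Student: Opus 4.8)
The plan is to use the explicit description of $\Contour{\Sspecies}$ as a \emph{free} category on the corners $(x,i): R_i^{\tagD} \to R_{i+1}^{\tagU}$ of nodes $x: R_1,\dots,R_n \to R$ of $\Sspecies$, as recorded in the Remark following Definition~\ref{definition/contour-category}. The map $\psi: \Sspecies \to \Rspecies$ sends each node $x: R_1,\dots,R_n \to R$ to a node $\psi(x): \psi(R_1),\dots,\psi(R_n) \to \psi(R)$ of the \emph{same arity} $n$, and therefore determines a bijection between the corner-set of $x$ and the corner-set of $\psi(x)$, namely $(x,i) \mapsto (\psi(x),i)$ for $0 \le i \le n$. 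The functor $\Contour{\psi}$ is the unique functor on the free category sending the generator $(x,i)$ to the generator $(\psi(x),i)$ and acting on objects by $R^{\epsilon} \mapsto \psi(R)^{\epsilon}$. Since both source and target are free, every arrow of $\Contour{\Sspecies}$ is uniquely a path of corners $(x_1,i_1)\cdots(x_k,i_k)$ (a composable sequence, with matching oriented colors at the joints), and likewise in $\Contour{\Rspecies}$, and $\Contour{\psi}$ acts on such a path termwise.

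First I would set up the ULF verification in the partial-composition form: given an arrow $\alpha$ of $\Contour{\Sspecies}$ with $\Contour{\psi}(\alpha) = uv$ for arrows $u,v$ of $\Contour{\Rspecies}$, I must produce a unique factorization $\alpha = \beta\gamma$ with $\Contour{\psi}(\beta)=u$, $\Contour{\psi}(\gamma)=v$. Write $\alpha$ as its unique corner-path $(x_1,i_1)\cdots(x_k,i_k)$. Then $\Contour{\psi}(\alpha)$ is the corner-path $(\psi(x_1),i_1)\cdots(\psi(x_k),i_k)$, which is \emph{its} unique factorization into generators in the free category $\Contour{\Rspecies}$. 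Hence the factorization $uv$ must split this path at one of its $k+1$ split points, say after the $j$th corner: $u = (\psi(x_1),i_1)\cdots(\psi(x_j),i_j)$ and $v = (\psi(x_{j+1}),i_{j+1})\cdots(\psi(x_k),i_k)$. The only candidate lift is then $\beta = (x_1,i_1)\cdots(x_j,i_j)$ and $\gamma = (x_{j+1},i_{j+1})\cdots(x_k,i_k)$; one checks these are well-formed composable paths in $\Contour{\Sspecies}$ (the joint colors in $\Sspecies$ map to the joint colors in $\Rspecies$ via the bijection on corners, and identities of the free category are themselves the empty paths), that $\beta\gamma = \alpha$, and that $\Contour{\psi}$ sends them to $u,v$ respectively. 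Uniqueness of $(\beta,\gamma)$ follows because \emph{any} such pair, expanded into corner-paths, concatenates to the corner-path of $\alpha$, hence must respect the unique split point determined by the lengths of $u$ and $v$.

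The only point needing care — and the main (mild) obstacle — is the bookkeeping with \emph{identities} and \emph{empty paths}: one must be sure that in the ULF definition the "factorization" $uv$ is allowed to have $u$ or $v$ an identity, and that in the free category $\Contour{\Rspecies}$ the identity arrow on an oriented color is precisely the empty corner-path, so that the split-point argument genuinely covers the boundary cases $j=0$ and $j=k$. This is exactly the phenomenon that makes $\Contour{-}$ fail to preserve ULF for general operads (cf.\ the Remark on $\mathbf 2 \to \mathbf 1$, where a nonidentity contour maps to an identity): there, the target operad is not free, so the $(h\circ_0 d,0) = (h,0)(d,0)(h,1)$ collapse via Equation~\eqref{equation/contour2} creates genuinely distinct lifts of a trivial factorization. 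In the present setting both $\Contour{\Sspecies}$ and $\Contour{\Rspecies}$ are free on their corners, no such collapse occurs, and the normal-form (unique corner-path) argument goes through cleanly. I would finish by remarking that ULF in the partial-composition form \eqref{definition/ULF}(ii) restricted to categories is exactly the classical Lawvere--Menni unique-lifting-of-factorizations property, so the above establishes the proposition.
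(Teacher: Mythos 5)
Your proof is correct, and it is essentially the argument the paper intends: the paper states this proposition without an explicit proof (``we can verify that\dots''), but the Remark immediately preceding it, on $\Contour{\Sspecies}$ being \emph{free} on corners, is exactly the ingredient you use, and your unique-corner-path/split-point argument (together with the observation that arity preservation makes $\Contour{\psi}$ send generators to generators) is the evident way to complete it. Your aside contrasting this with the failure of ULF-preservation for non-free contour categories correctly identifies why the hypothesis that $\psi$ is a map of \emph{species} (rather than of operads) is essential.
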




\subsection{The universal context-free grammar of a pointed species, and its associated tree contour language}
\label{section/unigram}

Every finite species~$\Sspecies$ equipped with a color~$S$ 
comes with a \emph{universal} context-free grammar~$\UnivGrammar{\Sspecies,S}=(\Contour{{\Sspecies}},\Sspecies,S,p_\Sspecies)$, 
characterized by the fact that $p_\Sspecies:{\FreeOperad{\Sspecies}}\to{\Words{\Contour{{\Sspecies}}}}$
is the unit of the contour / splicing adjunction.
By ``universal'' context-free grammar, we mean that any context-free grammar of arrows
$G=(\Ccategory,\Sspecies,S,p)$ with the same underlying species and start symbol factors uniquely through~$\UnivGrammar{\Sspecies,S}$ in the sense
that there exists a unique functor $q_G:\Contour{{\Sspecies}}\to\Ccategory$ satisfying the equation
\begin{center}
\begin{tikzcd}[column sep=2em]
{\FreeOperad{\Sspecies}}
\arrow[rr,"{p}"]
&&
\Words{\Ccategory}
\quad = \quad
{\FreeOperad{\Sspecies}}
\arrow[rr,"{p_{\Sspecies}}"] 
&& 
{\Words{\Contour{{\Sspecies}}}}
\arrow[rr,"{\Words{q_G}}"] && \Words{\Ccategory}
\end{tikzcd}
\end{center}
We refer to the language of arrows~$\Lang{\UnivGrammar{\Sspecies,S}}$, also noted $\Lang{\Sspecies,S}$, as a \defin{tree contour language}, and to its arrows as \defin{tree contour words}, since they describe the contours of $\Sspecies$-rooted trees with root color $S$, see left side of Fig.~\ref{fig:contour-word} for an illustration.
The factorization above shows that the context-free grammar $G$ is the functorial image of the universal grammar $\UnivGrammar{\Sspecies,S}$ along the functor of categories~$q_G$, whose purpose is to transport each corner of a node in $\Sspecies$ to the corresponding arrow in~$\Ccategory$ as determined by the grammar~$G$.
At the level of languages, we have 
$\Lang{G} = q_G\,\Lang{\Sspecies,S}$.

%
\begin{figure}
  \begin{center}\includegraphics[width=0.35\textwidth]{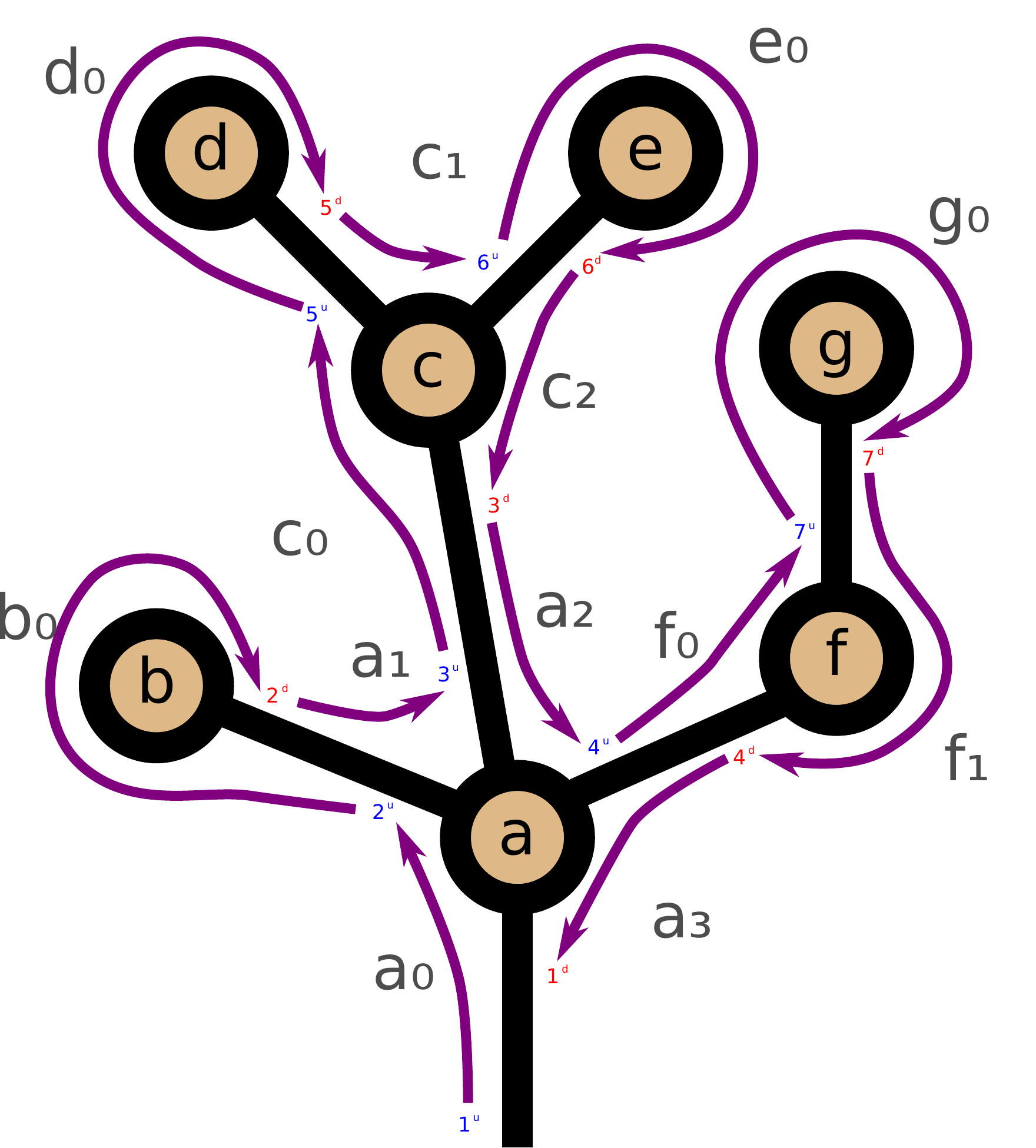} \qquad \vrule \qquad \includegraphics[width=0.35\textwidth]{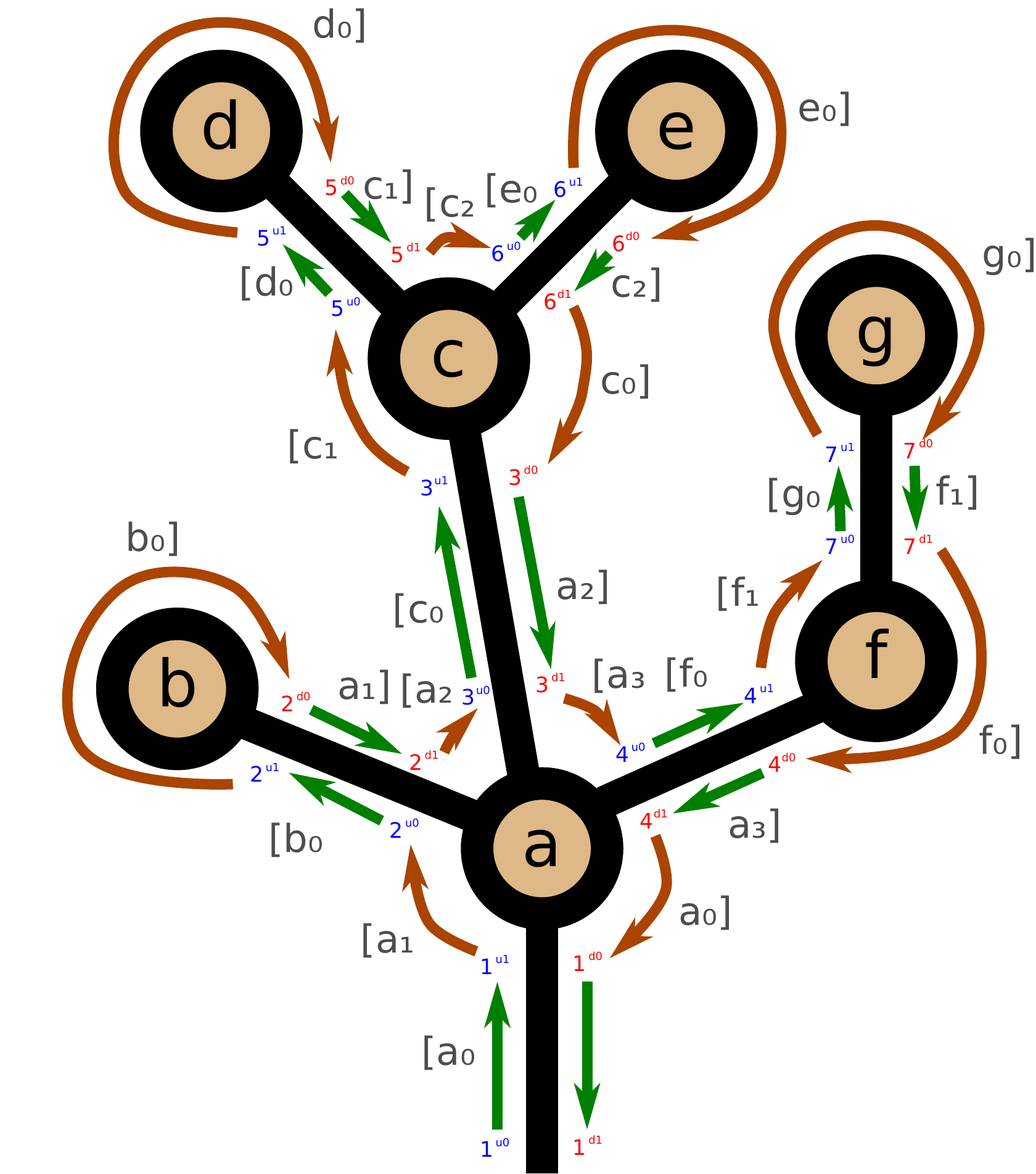} \end{center}
  \caption{Left: an $\Sspecies$-rooted tree of root color 1 and its corresponding contour word $\mathsf{a_0 b_0 a_1 c_0 d_0 c_1 e_0 c_2 a_2 f_0 g_0 f_1 a_3} : 1^\tagU \to 1^\tagD$.
    Right: the corresponding Dyck word obtained by first decomposing each corner of the contour into alternating actions of walking along an edge and turning around a node, and then annotating each arrow both by the orientation (with $u = \mathsf{[}, d = \mathsf{]}$) and the node-edge pair of its target.
  }
  \label{fig:contour-word}
\end{figure}


\begin{remark}
  The notion of tree contour language makes sense even for non-finitary pointed species $(\Sspecies,S)$, although in that case the resulting universal grammar $\UnivGrammar{\Sspecies,S}$ is no longer context-free, having infinitely many non-terminals.
  Still, it may be an interesting object of study.
  In particular, the tree contour language $\UnivGrammar{\TerminalSpecies,\ast}$ generated by the \emph{terminal species} $\TerminalSpecies$ with one color and a single operation of every arity appears to be of great combinatorial interest, with words in the language describing the shapes of rooted planar trees with arbitrary node degrees.
\end{remark}

\subsection{Representation theorem}
\label{section/represenation-theorem}

The achievement of the Chomsky-Sch{\"u}tzenberger representation theorem \cite[\S5]{ChomskySchuetzenberger1963} is to separate any
context-free grammar $G=(\Sigma,\Nonterminals,\Sentence,\Productions)$
into two independent components: a context-free grammar $\Dyck{n}$ with only one non-terminal over an alphabet $\Sigma_{2n} = \set{ [_1, ]_1, \dots, [_n, ]_n}$ of size $2n$ (for some $n$), which generates Dyck words of balanced brackets describing the shapes of parse trees with nodes labelled by
production rules of $G$ ;
and a finite state automaton $M$ to check that the edges of these trees may be appropriately colored by
the non-terminals of $G$ according to the labels of the nodes specifying the productions.
The original context-free language $\Lang{G}$ is then obtained as the image of the intersection of the
languages generated by $\Dyck{n}$ and by $M$, under a homomorphism $\Sigma_{2n}^* \to \Sigma^*$ that interprets
each bracket of the Dyck word by a word in the original alphabet, with a choice to either interpret the open or the close brackets as empty words.

In this section, we give a new proof of the Chomsky-Sch{\"u}tzenberger representation theorem,
generalized to context-free grammars of arrows $G$ over any category~$\Ccategory$.
Since the category $\Ccategory$ may have more than one object, the appropriate statement of the representation theorem cannot require the grammar describing the shapes of parse trees to have only one non-terminal, but we can nonetheless construct one that is \emph{$\Ccategory$-chromatic} in the following sense.
\begin{definition}
A context-free grammar of arrows in $\Ccategory$ is \defin{$\Ccategory$}-chromatic
when its non-terminals are the colors of~$\Words{\Ccategory}$, in other words the pairs~$(A,B)$ of objects of the category~$\Ccategory$.
\end{definition}
\noindent
Moreover, rather than using Dyck words to represent parse trees, we find it more natural to use tree contour words, based on the observation given above
(\S\ref{section/unigram}) that every context-free language may be \emph{canonically} represented as the image of a tree contour language generated by a context-free grammar with the same set of non-terminals.

As preparation to our proof of the representation theorem, we establish:
\begin{proposition}\label{proposition/node-colors-factorization}
  Let $\phi:\Sspecies \to \Rspecies$ be a map of species with underlying change-of-color function $\phi_C$.
  Let $\phi_C\,\Sspecies$ be the species with the same underlying set of nodes as~$\Sspecies$, but where every node $x:R_1,\dots,R_n\to R$ in~$\Sspecies$ becomes a node $x:\phi_C(R_1),\dots,\phi_C(R_n)\to \phi_C(R)$ in~$\phi_C\,\Sspecies$.
  Then $\phi$ factors as 
\begin{center}
\begin{tikzcd}[column sep=2em]
\Sspecies
\arrow[rr,"{\phi}"]
&&
\Rspecies
\quad = \quad
\Sspecies
\arrow[rr,"{\phi_{\colors}}"] 
&& 
\phi_C\,\Sspecies
\arrow[rr,"{\phi_\nodes}"] &&
\Rspecies
\end{tikzcd}
\end{center}
where $\phi_\colors$ is the identity on nodes and $\phi_\nodes$ is the identity on colors.
\end{proposition}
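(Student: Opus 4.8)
The plan is to read off the two factors directly from the definition of a map of species as a commuting square of spans of sets. Write $\Sspecies = (C,V,i,o)$ and $\Rspecies = (D,W,i',o')$, so that the data of $\phi$ amounts to functions $\phi_C : C \to D$ and $\phi_V : V \to W$ satisfying $\phi_C^* \circ i = i' \circ \phi_V$ and $\phi_C \circ o = o' \circ \phi_V$. The intermediate species is then $\phi_C\,\Sspecies = (D, V, \phi_C^* \circ i, \phi_C \circ o)$: it has the colors of $\Rspecies$, the nodes of $\Sspecies$, and the input/output functions obtained by relabelling along $\phi_C$ the colors attached to each node; concretely a node $x : R_1,\dots,R_n \to R$ of $\Sspecies$ becomes the node $x : \phi_C(R_1),\dots,\phi_C(R_n) \to \phi_C(R)$, exactly as in the statement. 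The requirement that $\phi_\nodes$ be the identity on colors forces the color set of $\phi_C\,\Sspecies$ to be precisely $D$, which is what we have chosen.

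First I would define $\phi_\colors : \Sspecies \to \phi_C\,\Sspecies$ to have color component $\phi_C$ and node component $\mathrm{id}_V$, and $\phi_\nodes : \phi_C\,\Sspecies \to \Rspecies$ to have color component $\mathrm{id}_D$ and node component $\phi_V$. For $\phi_\colors$ the commuting square reads $\phi_C^* \circ i = (\phi_C^* \circ i) \circ \mathrm{id}_V$ and $\phi_C \circ o = (\phi_C \circ o) \circ \mathrm{id}_V$, which hold trivially because the input/output functions of $\phi_C\,\Sspecies$ were \emph{defined} to be $\phi_C^* \circ i$ and $\phi_C \circ o$. For $\phi_\nodes$ the commuting square reads $\phi_C^* \circ i = i' \circ \phi_V$ and $\phi_C \circ o = o' \circ \phi_V$, which are literally the two equations witnessing that $\phi$ is a map of species. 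Hence both $\phi_\colors$ and $\phi_\nodes$ are well-defined maps of species, the former being the identity on nodes and the latter the identity on colors.

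Finally I would observe that the composite $\phi_\nodes \circ \phi_\colors$ has color component $\mathrm{id}_D \circ \phi_C = \phi_C$ and node component $\phi_V \circ \mathrm{id}_V = \phi_V$, so that $\phi_\nodes \circ \phi_\colors = \phi$ as required. There is really no obstacle here: the single point that needs care is pinning down the intermediate species $\phi_C\,\Sspecies$ so that it sits over the colors of $\Rspecies$ via the nodes of $\Sspecies$; once that definition is fixed, both squares commute by construction and the factorization is immediate from a one-line diagram chase.
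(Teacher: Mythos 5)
Your proof is correct and is the evident verification; the paper itself states this proposition without proof, treating it as immediate. Your one point of care --- that the color set of $\phi_C\,\Sspecies$ must be all of $D$ (not merely the image of $\phi_C$) for $\phi_\nodes$ to be the identity on colors --- is exactly right, and matches how the paper later uses $\phi_C\,\Sspecies$ to build the $\Ccategory$-chromatic grammar whose non-terminals are \emph{all} pairs of objects of $\Ccategory$.
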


\begin{proposition}\label{proposition/pullback-sliced-contour}
Every map of species $\psi:\Sspecies\to\Sspecies'$ injective on nodes
induces a commutative diagram 
\begin{equation}\label{equation/pullback-along-injective}
\begin{tikzcd}[column sep=3em,row sep=1em]
{\FreeOperad{\Sspecies}}
\arrow[dd,"{p_{\Sspecies}}"{swap}]\arrow[rr,"{\FreeOperad{\psi}}"] 
&& 
{\FreeOperad{\Sspecies'}}
\arrow[dd,"{p_{\Sspecies'}}"]
\\
&  &
\\
{\Words{\Contour{{\Sspecies}}}}
\arrow[rr,"{\Words{\Contour{{\psi}}}}"{swap}] &&
{\Words{\Contour{{\Sspecies'}}}}
\end{tikzcd}
\end{equation}
where the canonical functor of operads from $\FreeOperad{\Sspecies}$ to the pullback of $p_{\Sspecies'}$ along $\Words{\Contour{{\psi}}}$ is fully faithful.
\end{proposition}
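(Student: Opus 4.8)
The plan is to make the pullback square completely explicit, exploiting the fact (Proposition~\ref{proposition/pullback-along-ULF}) that a pullback of a functor out of a free operad along a ULF functor of operads is computed by an underlying pullback of species. Commutativity of~\eqref{equation/pullback-along-injective} comes for free: it is exactly the naturality square of the unit $p_{(-)}$ of the contour~/~splicing adjunction at the functor $\FreeOperad{\psi} : \FreeOperad{\Sspecies} \to \FreeOperad{\Sspecies'}$, once one unfolds the abbreviations $\Contour{\psi} := \Contour{\FreeOperad{\psi}}$ and $\Words{\Contour{\Sspecies}} := \Words{\Contour{\FreeOperad{\Sspecies}}}$. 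For the main claim, I would first note that since $\psi$ is a map of species, $\Contour{\psi}$ is a ULF functor of categories by Proposition~\ref{proposition/Contour-species-ULF}, hence $\Words{\Contour{\psi}}$ is a ULF functor of operads by Proposition~\ref{proposition/Words-ULF}. Proposition~\ref{proposition/pullback-along-ULF} then applies and identifies the pullback $P$ of $p_{\Sspecies'}$ along $\Words{\Contour{\psi}}$ with the free operad $\FreeOperad{\Sspecies''}$ over the species $\Sspecies''$ obtained by pulling back the map of species underlying $p_{\Sspecies'}$ along the one underlying $\Words{\Contour{\psi}}$; correspondingly, the canonical functor $K : \FreeOperad{\Sspecies} \to P$ is $\FreeOperad{\kappa}$ for the induced comparison map of species $\kappa : \Sspecies \to \Sspecies''$, whose two components are $\psi$ and the map of species underlying $p_{\Sspecies}$.

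The crux --- and the only place the hypothesis that $\psi$ is injective on nodes enters --- is to compute $\Sspecies''$ and $\kappa$ by hand. A color of $\Sspecies''$ amounts to a color $R'$ of $\Sspecies'$ together with a color $(X,Y)$ of $\Words{\Contour{\Sspecies}}$ with $\Contour{\psi}(X)=(R')^\tagU$ and $\Contour{\psi}(Y)=(R')^\tagD$; since $\Contour{\psi}$ preserves the up/down orientation and acts by $\psi_C$ on underlying colors, this forces $X=S^\tagU$, $Y=T^\tagD$ for colors $S,T$ of $\Sspecies$ with $\psi_C(S)=\psi_C(T)=R'$, so the colors of $\Sspecies''$ correspond bijectively to pairs $(S,T)$ of colors of $\Sspecies$ with $\psi_C(S)=\psi_C(T)$. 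A node of $\Sspecies''$ amounts to a node $x'$ of $\Sspecies'$, say $m$-ary, together with an operation $v=c_0{-}\dots{-}c_m$ of $\Words{\Contour{\Sspecies}}$ with $\Words{\Contour{\psi}}(v)=p_{\Sspecies'}(x')=(x',0){-}\dots{-}(x',m)$, whose entries are single corners of $\Sspecies'$. Since $\Contour{\psi}$ sends a word of corners of $\Sspecies$ to a word of corners of $\Sspecies'$ of the same length via $(z,i)\mapsto(\psi(z),i)$, each $c_i$ must be a single corner $(z_i,i)$ with $\psi(z_i)=x'$, and injectivity of $\psi$ on nodes then gives $z_0=\dots=z_m=:z$, so that $v=p_{\Sspecies}(z)$ and the node $(x',v)$ of $\Sspecies''$ is exactly $\kappa(z)$. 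Hence $\kappa$ is a bijection from the nodes of $\Sspecies$ onto those of $\Sspecies''$, carrying $z:S_1,\dots,S_m\to S$ to $z:(S_1,S_1),\dots,(S_m,S_m)\to(S,S)$, and it is the diagonal $R\mapsto(R,R)$ on colors; in particular every node of $\Sspecies''$ involves only ``diagonal'' colors $(S,S)$.

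With this description in hand, $\kappa$ exhibits $\Sspecies''$ as a copy of $\Sspecies$ (colors renamed diagonally) together with a set of extra colors $(S,T)$, $S\neq T$, that belong to no node. Faithfulness of $K=\FreeOperad{\kappa}$ is then immediate, $\kappa$ being injective on colors and on nodes, so that $\FreeOperad{\kappa}$ is injective on operations (relabelling $\Sspecies$-trees along such a $\kappa$ is injective). For fullness, any operation $\gamma$ of $\FreeOperad{\Sspecies''}$ all of whose input and output colors are diagonal is, by the inductive description of a free operad's operations (an identity, or a node composed with a list of operations), built entirely out of diagonal colors and $\Sspecies$-nodes: if $\gamma = x\circ(\gamma_1,\dots,\gamma_m)$ with $x$ the node $z:(S_1,S_1),\dots,(S_m,S_m)\to(S,S)$, then each $\gamma_j$ again has all-diagonal boundary $(S_j,S_j)$, and one recurses; so $\gamma=K(\alpha)$ for a unique operation $\alpha$ of $\FreeOperad{\Sspecies}$. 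Thus $K$ restricts to a bijection on every hom-set $\FreeOperad{\Sspecies}(R_1,\dots,R_n;R)\to P(KR_1,\dots,KR_n;KR)$, i.e.\ $K$ is fully faithful. I expect the main obstacle to be the careful identification of the nodes of $\Sspecies''$ carried out above; the remaining steps are formal manipulations with the adjunction and with free operads.
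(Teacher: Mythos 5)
Your proposal is correct and follows essentially the same route as the paper's proof: commutativity via naturality of the unit of the contour~/~splicing adjunction, reduction of the operadic pullback to a species pullback via Prop.~\ref{proposition/pullback-along-ULF} (justified by Props.~\ref{proposition/Contour-species-ULF} and~\ref{proposition/Words-ULF}), an explicit identification of the pullback species using injectivity of $\psi$ on nodes, and the observation that the comparison map is injective on colors, bijective on nodes, and that the extra non-diagonal colors support no nodes. Your inductive argument for fullness merely spells out the final step that the paper leaves as ``we conclude.''
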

\noindent
Now, let $G=(\Ccategory,\Sspecies,S,p)$ be any context-free grammar of arrows, and by Prop.~\ref{proposition/node-colors-factorization} consider the corresponding $\Ccategory$-chromatic grammar $G_\nodes = (\Ccategory,\phi_C\,\Sspecies,(A,B),p_\nodes)$, where $(A,B) = p(S)$.
We have a commutative diagram
\begin{equation}\label{equation/pullback-of-C-chromatic}
\begin{tikzcd}[column sep=3em,row sep=1em]
{\FreeOperad{\Sspecies}}
\arrow[dd,"{p_{\Sspecies}}"{swap}]\arrow[rr,"{\FreeOperad{\phi_\colors}}"]
&& 
{\FreeOperad{\phi_C\,\Sspecies}}
\arrow[dd,"{p_{\phi_C\,\Sspecies}}"]
\\
&  &
\\
{\Words{\Contour{{\Sspecies}}}}
\arrow[rr,"{\Words{\Contour{{\phi_\colors}}}}"{swap}] 
\arrow[rd,"{\Words{q_{G}}}"{swap}] 
&&
{\Words{\Contour{{\phi_C\,\Sspecies}}}}
\arrow[ld,"{\Words{q_{G_\nodes}}}"] 
\\
&
\Words{\Ccategory}
\end{tikzcd}
\end{equation}
where the commutativity of the lower triangle follows from the equation~$\phi = \phi_\colors\phi_\nodes$
and the contour / splicing adjunction.
Note also that $\Contour{\phi_\colors}$ is a ULF functor of categories by Prop.~\ref{proposition/Contour-species-ULF} and also finitary
because $\phi_\colors$ is finitary in the expected sense (and even finite).
From this follows that $M_\colors = (\Contour{\phi_C\,\Sspecies},\Contour{\Sspecies},\Contour{\phi_\colors},S^\tagU,S^\tagD)$ defines a finite-state automaton.
By Props~\ref{proposition/intersection-of-languages} and \ref{proposition/pullback-sliced-contour} and the translation principle,
we deduce that
%
$$
\Contour{\phi_\colors} \, \Lang{\Sspecies,S} \, = \, \Lang{\phi_C\,\Sspecies,(A,B)} \, \cap \, L_{M_\colors}.
$$
Finally, using that $G$ is the image of the universal grammar $\UnivGrammar{\Sspecies,S}$ and considering the commutative diagram~\eqref{equation/pullback-of-C-chromatic}, we conclude:
\[
  L_G = q_G\,L_{\Sspecies,S} = q_{G_\nodes}\,\Contour{\phi_\colors} \, L_{\Sspecies,S} = q_{G_\nodes}\, (\Lang{\phi_C\,\Sspecies,(A,B)} \, \cap \, L_{M_\colors}).
\]

\begin{theorem}\label{theorem/CS}
Every context-free language of arrows of a category~$\Ccategory$
is the functorial image of the intersection of a $\Ccategory$-chromatic context-free tree contour language with a regular language.
\end{theorem}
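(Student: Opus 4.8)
The plan is to assemble the constructions and lemmas built up in \S\ref{section/pulling-back}--\S\ref{section/unigram}. The guiding idea is that a context-free grammar $G=(\Ccategory,\Sspecies,S,p)$ carries two largely independent kinds of information: the bare combinatorial shape of its parse trees, recorded by the species $\Sspecies$ and faithfully by the universal grammar $\UnivGrammar{\Sspecies,S}$ with its tree contour language $\Lang{\Sspecies,S}$ (recall $\Lang{G}=q_G\,\Lang{\Sspecies,S}$ from \S\ref{section/unigram}); and the labelling data telling us which arrow of $\Ccategory$ each corner denotes, recorded by the functor $q_G:\Contour{\Sspecies}\to\Ccategory$. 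To reach the form asserted by the theorem it then suffices to peel off from $\Sspecies$ the further data of which non-terminal decorates each edge of a parse tree, which is exactly the node/color factorization $\phi=\phi_\colors\,\phi_\nodes$ of Prop.~\ref{proposition/node-colors-factorization}.

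Concretely, write $\phi:\Sspecies\to\ForgetOperad{\Words{\Ccategory}}$ for the underlying map of species of $p$, and put $(A,B)=p(S)$. First, factor $\phi=\phi_\colors\,\phi_\nodes$ through $\phi_C\,\Sspecies$ by Prop.~\ref{proposition/node-colors-factorization}; the grammar $G_\nodes=(\Ccategory,\phi_C\,\Sspecies,(A,B),p_\nodes)$ is $\Ccategory$-chromatic (if one insists on \emph{all} pairs $(A,B)$ occurring as non-terminals, and not merely those in the image of $\phi_C$, adjoin the missing ones as isolated colors; this changes nothing). Second, $\phi_\colors$ is the identity on nodes, hence injective on nodes, so Prop.~\ref{proposition/pullback-sliced-contour} applies and gives the commuting square \eqref{equation/pullback-along-injective}, whose comparison functor from $\FreeOperad{\Sspecies}$ into the pullback of $p_{\phi_C\,\Sspecies}$ along $\Words{\Contour{\phi_\colors}}$ is fully faithful. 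Third, $\Contour{\phi_\colors}:\Contour{\Sspecies}\to\Contour{\phi_C\,\Sspecies}$ is ULF by Prop.~\ref{proposition/Contour-species-ULF} and finitary since $\phi_\colors$ is finite, so $M_\colors=(\Contour{\phi_C\,\Sspecies},\Contour{\Sspecies},\Contour{\phi_\colors},S^\tagU,S^\tagD)$ is a genuine non-deterministic finite-state automaton, recognizing a regular language $L_{M_\colors}$. Fourth, feed the square into Prop.~\ref{proposition/intersection-of-languages} (pulling the universal grammar $\UnivGrammar{\phi_C\,\Sspecies,(A,B)}$ back along $M_\colors$) and invoke the translation principle, legitimated by the full faithfulness just recorded, to obtain
\[
  \Contour{\phi_\colors}\,\Lang{\Sspecies,S} \;=\; \Lang{\phi_C\,\Sspecies,(A,B)}\,\cap\,L_{M_\colors}.
\]
Finally, combine this with $\Lang{G}=q_G\,\Lang{\Sspecies,S}$, the identity $q_G=\Contour{\phi_\colors}\,q_{G_\nodes}$ (the category-level form of the commuting lower triangle of \eqref{equation/pullback-of-C-chromatic}), and functoriality of functorial image, to conclude
\[
  \Lang{G} \;=\; q_{G_\nodes}\,\Contour{\phi_\colors}\,\Lang{\Sspecies,S} \;=\; q_{G_\nodes}\bigl(\Lang{\phi_C\,\Sspecies,(A,B)}\,\cap\,L_{M_\colors}\bigr),
\]
which exhibits $\Lang{G}$ as the functorial image under $q_{G_\nodes}:\Contour{\phi_C\,\Sspecies}\to\Ccategory$ of the intersection of the $\Ccategory$-chromatic context-free tree contour language $\Lang{\phi_C\,\Sspecies,(A,B)}$ with the regular language $L_{M_\colors}$, as desired.

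The diagram chase and the automaton bookkeeping above are essentially routine once the ingredients are in place; the real work sits in Prop.~\ref{proposition/pullback-sliced-contour}, which I expect to be the main obstacle. What must be shown is that for $\psi$ injective on nodes the comparison functor from $\FreeOperad{\Sspecies}$ to the pullback of $p_{\Sspecies'}$ along $\Words{\Contour{\psi}}$ is \emph{fully faithful} --- not merely identity-on-objects or essentially surjective. This is precisely the property that makes the translation principle applicable and therefore makes the intersection formula hold on the nose rather than only up to a non-full language-preserving comparison. Proving it amounts to checking, sector by sector, that an operation of $\FreeOperad{\Sspecies'}$ lying over a contour word that comes from $\Words{\Contour{\Sspecies}}$ is forced to be the image of a unique operation of $\FreeOperad{\Sspecies}$, using that distinct nodes of $\Sspecies$ have distinct images under $\psi$ and that every sector of a free operad factors uniquely into corners in the contour category. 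A smaller but genuine point to handle carefully is the treatment of basepoints ($S$ versus $S^\tagU,S^\tagD$ versus $(A,B)$) and the exact reading of the $\Ccategory$-chromatic condition, which is just the harmless padding remark noted above but should be stated cleanly.
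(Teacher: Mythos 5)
Your proposal is correct and follows essentially the same route as the paper's own argument: the node/colour factorization of Prop.~\ref{proposition/node-colors-factorization}, the $\Ccategory$-chromatic grammar $G_\nodes$ and the automaton $M_\colors$ built from $\Contour{\phi_\colors}$, the intersection identity obtained from Props.~\ref{proposition/intersection-of-languages} and \ref{proposition/pullback-sliced-contour} together with the translation principle, and the final chain of equalities through $\Lang{G}=q_G\,\Lang{\Sspecies,S}$. Your side remarks on padding the non-terminals to get all pairs of objects and on the basepoint bookkeeping are reasonable refinements of details the paper leaves implicit, and your identification of Prop.~\ref{proposition/pullback-sliced-contour} as the load-bearing lemma matches where the paper puts the real work.
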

\noindent
The original statement of the Chomsky-Sch{\"u}tzenberger theorem can be recovered by relying
on the fact that any tree contour word can be faithfully translated to a Dyck word, via an easy translation that doubles the number of letters,
and which also has a geometric interpretation that involves decomposing each corner of the contour into alternating actions of walking along an edge and turning around a node, see right side of Fig.~\ref{fig:contour-word}.
Intriguingly, this decomposition suggests the existence of an embedding of the contour category $\Contour{\Sspecies}$ into a \emph{bipartite} contour category, where each object $R^\epsilon$ has been replaced by a pair of objects $R^{\epsilon0}$ and $R^{\epsilon1}$, in a way that is analogous to the embedding of the ``oriented cartographic group'' used to represent maps on oriented surfaces into the cartographic group for maps on not necessarily orientable surfaces (cf.~\cite{ShabatVoevodsky1990,JonesSingerman1994}).

\ack{We thank Bryce Clarke for helpful discussions about this work, and to the anonymous reviewers for comments improving the presentation.}

\bibliographystyle{entics}
\bibliography{refs}

\ifwithAppendices

\appendix

\if0
\section{The language of arrows as a morphism of refinement systems}
\label{appendix/language-of-arrows-as-morphism-of-refinement-systems}

The usual category of sets and functions can be organized into a colored operad $\Set$, whose colors are sets and whose $n$-ary operations are $n$-ary functions $f:X_1\times\cdots\times X_n\to X$, with partial composition of an $m$-ary function into an $(n+1)$-ary function to produce an $(n+m)$-ary function defined as expected.
Moreover, every operad~$\Ooperad$ comes equipped with a canonical functor
$ \const{\Ooperad}:\Ooperad\to\Set $
defined on colors by taking 
$\const{\Ooperad}(R) = \set{c \mid c : R}$ to be the set of constants of color $R$ in $\Ooperad$, and on operations $f : R_1,\dots,R_n \to R$
by taking $\const{\Ooperad}(f)$ to be the function
$(c_1,\dots,c_n) \mapsto f\circ (c_1,\dots,c_n)$
that sends any $n$-tuple of constants to the new constant defined by parallel composition into the inputs of $f$.
For instance, in the case of the operad of spliced words, the functor $\const{\Words{\Sigma}}$ interpets the unique color of
$\Words{\Sigma}$ by the set $\Sigma^*$ of words, and the spliced word $w_0{-}w_1{-}\dots{-}w_n$ by the function
$\Sigma^{*n} \to \Sigma^*$ sending an $n$-tuple of words $(u_1,\dots,u_n)$ to their spliced concatenation $w_0 u_1 w_1 \dots u_n w_n$.

A functor $A : \Ooperad \to \Set$ is also called an \emph{algebra} of the operad $\Ooperad$, which may be defined explicitly as the data of a set~$A_R$ for every color $R$ of operad~$\Ooperad$ and a function $A_f : A_{R_1}\times\cdots\times A_{R_n} \to A_{R}$ for every operation $f:R_1,\dots,R_n\to R$ of $\Ooperad$, satisfying the equations
$A_{id_R} = id_{A_R}$
and
$A_{f} \circ (A_{g_1},\dots,A_{g_n}) = A_{f\circ (g_1,\dots,g_n)}$
for every color $R$, operation $f$, and tuple of compatible operations $g_1,\dots,g_n$ of $\Ooperad$.
An important property of $\const{\Ooperad}$ is that it is the initial $\Ooperad$-algebra, along the following standard notion of homomorphism:
a homomorphism $\theta:A\Rightarrow B$ between algebras is the data of a function $\theta_R:A_R\to B_R$
for every color~$R$, satisfying the naturality equation
$B_f\circ (\theta_{R_1},\dots,\theta_{R_n}) = \theta_R \circ A_f$
for every operation $f:R_1,\dots,R_n\to R$ of $\Ooperad$.
There is a canonical homomorphism from $\const{\Ooperad}$ to any algebra $A$ given by
the family of functions sending a constant $c:R$ to its interpretation by the algebra $A_c \in A_R$.
That this $\Ooperad$-algebra homomorphism $\theta : \const{\Ooperad} \to A$ is unique is ensured by the naturality equation applied to $f = c$.

The notion of homomorphism $\theta:A\Rightarrow B$ of $\Ooperad$-algebras may be generalized to define natural transformation $\theta : A \Rightarrow B : \Ooperad \to \Poperad$ between functors of operads with arbitrary codomain.
For that purpose, it is convenient to introduce the operad $\Arrowcatof{\Poperad}$ whose colors are given by the unary operations $f$ of $\Poperad$, and whose $n$-ary operations $f_1,\dots,f_n\to f$ are given by pairs $(h_s,h_t)$ of $n$-ary operations of $\Poperad$ such that
$h_t \circ (f_{1},\dots,f_{n}) = f \circ h_s$.
This operad comes equipped with two evident functors $\source,\target: \Arrowcatof{\Poperad}\to\Poperad$ defined by first and second projection.
A natural transformation $\theta : A \Rightarrow B : \Ooperad \to \Poperad$ may then be defined as a functor of operads 
$\tilde{\theta}:\Ooperad\to\Arrowcatof{\Poperad}$ such that
$\source\circ \tilde{\theta} = A$ and $\target\circ \tilde{\theta} = B$.

Now, for any functor of operads ${p:\Der\to\Base}$, by initiality of the algebra of constants there is a canonical $\Der$-algebra homomorphism $\const{p} : \const{\Der} \Rightarrow \const{\Base} \circ p$, given by the family of functions $\const{\Der}(R) \to \const{\Base}(p(R))$ sending any constant $\alpha : R$ of $\Der$ to the corresponding constant $p(\alpha) : p(R)$ of $\Base$.
By the definition of natural transformation, $\const{p}$ induces a functor $\widetilde{\const{p}} : \Der \to \Arrowcatof{\Set}$ making the following diagram commute:
\begin{equation}\label{equation/morphism-to-setarrow}
\begin{tikzcd}[column sep = 5em]
\Der \ar[d,"{p}"{swap}]  \ar[r, "\widetilde{\const{p}}"] & \Arrowcatof{\Set} \ar[d,"{\target}"] \\   
 \Base \ar[r, "{\const{\Base}}"] & \Set
\end{tikzcd}
\end{equation}
Adapting the terminology of \cite{mz18isbell}, the pair of functors $(\const{\Base},\widetilde{\const{p}})$ may be considered as a morphism of refinement systems, providing a canonical interpretation of type refinements $R \refs A$ as indexed sets of derivations of constants $\const{\Der}(R) \to \const{\Base}(A)$.
In the case of the functor $p : \FreeOperad{\Sspecies} \to \Words{\Ccategory}$ coming from a context-free grammar of arrows, this morphism interprets every non-terminal $R \refs (A,B)$ as the set $\const{\Der}(R)$ of complete parse trees with root label $R$, equipped with the function $\const{\Der}(R) \to \const{\Words{\Ccategory}}(A,B) \cong \Ccategory(A,B)$ sending any parse tree to the underlying arrow $w : A \to B$ that it derives.

Consider next the operad $\SubSet$ whose colors are pairs $(X,U)$ of a set~$X$ and of a subset~$U\subseteq X$,
and whose operations $f:(X_1,U_1),\dots,(X_n,U_n)\to(X,U)$ are functions 
$f:X_1\times \dots \times X_n\to X$
such that $f(U_1,\dots,U_n) \subseteq U$.
%
$\SubSet$ comes equipped with an evident forgetful functor $\mathsf{base} : \SubSet\to\Set$.
Moreover, there is a morphism of refinement systems from $\target$ to $\mathsf{base}$, given by the functor 
$\imagefunctor:\Arrowcatof{\Set}\to\SubSet$ which sends
any function $f:X\to Y$ to its image $f(X) \subseteq Y$, and every pair $(h_s,h_t)$ of functions $h_s:X_1\times\dots\times X_n\to X$
and $h_t:Y_1\times\dots\times Y_n\to Y$ defining an operation $f_1,\dots,f_n \to f$ of $\Arrowcatof{\Set}$ to the function $h_t$, which does indeed, by the naturality equation $h_t \circ (f_{1},\dots,f_{n}) = f \circ h_s$,
satisfy that $h_t(f_1(X_1)\times\dots\times f_n(X_n))\subseteq f(X)$.
Composing this morphism $\target \to \mathsf{base}$ with \eqref{equation/morphism-to-setarrow}, one obtains a canonical interpretation of any functor $p : \Der \to \Base$ in the refinement system of subsets:
\begin{equation}\label{equation/morphism-to-subset}
\begin{tikzcd}[column sep = 5em]
\Der \ar[d,"{p}"{swap}]  \ar[r, "\widetilde{\const{p}}\imagefunctor"] & \SubSet \ar[d,"{\mathsf{base}}"] \\   
 \Base \ar[r, "{\const{\Base}}"] & \Set
\end{tikzcd}
\end{equation}
Finally, this interpretation of the functor $p : \FreeOperad{\Sspecies} \to \Words{\Ccategory}$ coming from a grammar $G = (\Ccategory,\Sspecies,\phi,S)$, applied to to the start symbol $S \refs (A,B)$, recovers the language of arrows $\Lang{G} \subseteq \Ccategory(A,B)$.

\fi

\section{Supplementary proofs}

\subsection*{Proofs of Props.~\ref{proposition/basic-closure}(i) and (ii).}

\begin{enumerate}
\item[(i)] 
  Given two grammars $G_1 = (\Ccategory, \Sspecies_1, S_1, p_1)$ and $G_2 = (\Ccategory, \Sspecies_2, S_2, p_2)$, where $S_1$ and $S_2$ both refine the same gap type $(A,B)$, we define a new grammar $G = (\Ccategory, \Sspecies, S, p)$ that generates the union of the two languages $\Lang{G} = \Lang{G_1} \cup \Lang{G_2}$ by taking $\Sspecies$ to be the disjoint union of the colors and operations of $\Sspecies_1$ and $\Sspecies_2$ combined with a distinguished color $S$ and pair of unary nodes $i_1 : S_1 \to S$ and $i_2 : S_2 \to S$, and defining $\phi : \Sspecies \to \ForgetOperad{\Words{\Ccategory}}$ to be the copairing of $\phi_1$ and $\phi_2$ extended with the mappings $\phi(S) = (A,B)$ and $\phi(i_1) = \phi(i_2) = \id[A]{-}\id[B]$.
\item[(ii)]
  Given grammars $G_1 = (\Ccategory,\Sspecies_1,S_1,p_1)$, \dots, $G_n = (\Ccategory,\Sspecies_n,S_n,p_n)$ where $S_i \refs (A_i,B_i)$ for each $1 \le i \le n$, together with an operation
  $w_0{-}w_1{-}\dots{-}w_n : (A_1,B_1),\dots,(A_n,B_n) \to (A,B)$ of $\Words{\Ccategory}$, we construct a new grammar $G = (\Ccategory,\Sspecies,S,p)$ that generates the spliced concatenation $w_0 \Lang{G_1} w_1 \dots \Lang{G_n} w_n$ by taking $\Sspecies$ to be the disjoint union of the colors and operations of $\Sspecies_1,\dots,\Sspecies_n$ combined with a distinguished color $S$ and a single $n$-ary node $x : S_1,\dots,S_n \to S$, and defining $\phi : \Sspecies \to \ForgetOperad{\Words{\Ccategory}}$ to be the cotupling of $\phi_1,\dots,\phi_n$ extended with the mappings $\phi(S) = (A,B)$ and $\phi(x) = w_0{-}w_1{-}\dots{-}w_n$.
\end{enumerate}

\subsection*{Proof of Prop.~\ref{proposition/bilinear-normal-form}}

Let $G = (\Ccategory,\Sspecies,S,p)$ be a context-free grammar of arrows.
A bilinear grammar $G_\bin = (\Ccategory,\Sspecies_\bin,S,p_\bin)$ over the same category and with the same start symbol is constructed as follows.
$\Sspecies_\bin$ includes all of the colors and all of the nullary nodes of $\Sspecies$, with $\phi_\bin(R) = \phi(R)$ and $\phi_\bin(c) = \phi(c)$.
Additionally, for every node $x : R_1,\dots,R_n \to R$ of $\Sspecies$ of positive arity $n>0$, where $\phi(x) = w_0{-}\dots{-}w_n : (A_1,B_1),\dots,(A_n,B_n) \to (A,B)$ in $\Words{\Ccategory}$, we include in $\Sspecies_\bin$:
\begin{itemize}
\item $n$ new colors $I_{x,0},\dots,I_{x,n-1}$, with $\phi_\bin(I_{x,i-1}) = (A,A_i)$ for $1 \le i \le n$;
\item one nullary node $x_0: I_{x,0}$, with $\phi_\bin(x_0) = w_0$;
\item $n$ binary nodes $x_1,\dots,x_n$, where $x_i : I_{x,i-1},R_i \to I_{x,i}$ and $\phi_\bin(x_i) = \id[A]{-}\id[A_i]{-}w_i$ for all $1 \le i \le n$, under the convention that $I_{x,n} = R$.
\end{itemize}
We define the functor $B : \FreeOperad{\Sspecies} \to \FreeOperad{\Sspecies_\bin}$ on colors by $B(R) = R$, on nullary nodes by $B(c) = c$, and on nodes $x : R_1,\dots,R_n \to R$ of positive arity by
$B(x) = x_n\circ_0 \dots \circ_0 x_1 \circ_0 x_0$.
By induction on $n$, there is a one-to-one correspondence between nodes $x : R_1,\dots,R_n \to R$ of $\Sspecies$ and operations $B(x) : R_1,\dots,R_n \to R$ of $\FreeOperad{\Sspecies_\bin}$, so the functor $B$ is fully faithful.

\subsection*{Proof of Prop.~\ref{proposition/pullback-along-ULF}}

Suppose $p_\Qoperad : \Qoperad \to \Ooperad$ is a ULF functor of operads, and consider the pullback of $\phi : \Sspecies \to \ForgetOperad{\Ooperad}$ along $\ForgetOperad{p_\Qoperad} : \ForgetOperad{\Qoperad} \to \ForgetOperad{\Ooperad}$ in the category of species:
\[
  \begin{tikzcd}[column sep=2em,row sep=.4em]
{\Sspecies'}\arrow[dd,"{\phi'}"{swap}]\arrow[rr,"\psi"] && {\Sspecies}\arrow[dd,"\phi"]
\\
& pullback &
\\
{\ForgetOperad{\Qoperad}}\arrow[rr,"{\ForgetOperad{p_{\Qoperad}}}"{swap}] && {\ForgetOperad{\Ooperad}}
\end{tikzcd}
\]
We wish to show that there is a corresponding pullback in the category of operads:
\[
\begin{tikzcd}[column sep=1.5em,row sep=.4em]
\FreeOperad{\Sspecies'}\arrow[dd,"{p'}"{swap}]\arrow[rr,"\FreeOperad{\psi}"] && \FreeOperad{\Sspecies}\arrow[dd,"p"]
\\
& pullback &
\\
\Qoperad\arrow[rr,"{p_{\Qoperad}}"{swap}] && \Ooperad
\end{tikzcd}
\]
Note that $\FreeOperad{\Sspecies'}$ and $\Sspecies'$ have the same colors, namely pairs $(R,R')$ of a color $R$ in $\Sspecies$ and a color $R'$ in $\Qoperad$ such that $\phi(R) = p_\Qoperad(R')$.
It suffices to show that any pair $(\alpha,\alpha')$ of an operation $\alpha$ of $\FreeOperad{\Sspecies}$ and an operation $\alpha'$ of $\Qoperad$ such that $p(\alpha) = p_\Qoperad(\alpha')$ corresponds to a unique operation $\beta$ of $\FreeOperad{\Sspecies'}$ such that $(\FreeOperad{\psi})(\beta) = \alpha$ and $p'(\beta) = \alpha'$.
Now, by the inductive characterization of $\FreeOperad{\Sspecies}$ (cf.~\S\ref{section/magic-formula}), there are two cases to consider:
\begin{itemize}
\item $\alpha = \id[R]$ is an identity operation.
  Since $p_\Qoperad(\alpha') = p(\alpha) = \id[p(R)]$ and ULF functors have unique liftings of identities, $\alpha'$ must also be an identity $\alpha' = \id[R']$ for some $R'$ such that $p_\Qoperad(R') = p(R)$.
  We take $\beta = \id[(R,R')]$.
\item $\alpha = x\bullet(\gamma_1,\dots,\gamma_n)$ is a formal composition of some $n$-ary node $x$ of $\Sspecies$ with operations $\gamma_1,\dots,\gamma_n$ of $\FreeOperad{\Sspecies}$.
  Since $p_\Qoperad(\alpha') = p(\alpha) = \phi(x) \circ (p(\gamma_1),\dots,p(\gamma_n))$, by ULF there exist unique $\beta',\gamma'_1,\dots,\gamma'_n$ such that $\alpha' = \beta' \circ (\gamma'_1,\dots,\gamma'_n)$ and $p_\Qoperad(\beta') = \phi(x)$ and $p_\Qoperad(\gamma'_1) = p(\gamma_1),\dots,p_\Qoperad(\gamma'_n) = p(\gamma_n)$.
  We take $\beta = (x,\beta') \bullet ((\gamma_1,\gamma'_1),\dots,(\gamma_n,\gamma'_n))$.
\end{itemize}

\subsection*{Proof of Prop.~\ref{proposition/pullback-sliced-contour}.}
Every map of species $\psi:\Sspecies\to\Sspecies'$ induces a naturality square
$$
\begin{tikzcd}[column sep=3em,row sep=1em]
{\FreeOperad{\Sspecies}}
\arrow[dd,"{p_{\Sspecies}}"{swap}]\arrow[rr,"{\FreeOperad{\psi}}"] 
&& 
{\FreeOperad{\Sspecies'}}
\arrow[dd,"{p_{\Sspecies'}}"]
\\
&  &
\\
{\Words{\Contour{{\Sspecies}}}}
\arrow[rr,"{\Words{\Contour{{\psi}}}}"{swap}] &&
{\Words{\Contour{{\Sspecies'}}}}
\end{tikzcd}
$$
in the category of operads where the functors of operads $p_{\Sspecies}$ and $p_{\Sspecies'}$ associated to the universal grammars
are the units of the contour / splicing adjunction, see \S\ref{section/contour-category} and \S\ref{section/unigram}.
By Prop.~\ref{proposition/pullback-along-ULF}, we know that the pullback of $p_{\Sspecies'}$ along $\Words{\Contour{{\psi}}}$
is obtained from a corresponding pullback in the category of species
$$
\begin{tikzcd}[column sep=1em,row sep=.4em]
{\Rspecies}
\arrow[dd,"{\rho}"{swap}]\arrow[rr,"{\psi}"] 
&&
{\Sspecies'}
\arrow[dd,"{\phi_{\Sspecies'}}"]
\\
& pullback &
\\
{\Words{\Contour{{\Sspecies}}}}
\arrow[rr,"{\Words{\Contour{{\psi}}}}"{swap}] &&
{\Words{\Contour{{\Sspecies'}}}}
\end{tikzcd}
$$
The pullback~$\Rspecies$ of the map of species ${\phi_{\Sspecies'}}$ along the ULF functor of operads $\Words{\Contour{{\psi}}}$ is the species with colors defined as triples $(R,(R_1^{\tagU},R_2^{\tagD}))$
where $R$ is a color of $\Sspecies'$ and $R_1$, $R_2$ are colors of $\Sspecies$ such that ${\psi(R_1)=\psi(R_2)=R}$~; and with $n$-ary nodes defined as pairs $(x,f)$
where $x$ is a $n$-ary node in $\Sspecies'$ and $f$ is an $n$-ary operation in $\Words{\Contour{\Sspecies}}$ necessarily of the form $f = (y,0){-}\dots{-}(y,n)$, 
for $y$ the unique $n$-ary node of $\Sspecies$ such that $\psi(y)=x$,
since the map of species $\psi:\Sspecies\to\Sspecies'$ is injective of nodes.
The canonical map of species ${\Sspecies}\to{\Rspecies}$ transports every color $R$ of~$\Sspecies$ to the color~$(R,\psi(R)^{\tagU},\psi(R)^{\tagD})$
and every $n$-ary node $y$ of~$\Sspecies$ to the $n$-ary node $(\psi(y),(y,0){-}\dots{-}(y,n))$ of $\Rspecies$.
From this follows that the canonical map of species ${\Sspecies}\to{\Rspecies}$ is injective on colors and bijective on nodes.
Moreover, there are no nodes in $\Rspecies$ whose colors are outside of the image of $\Sspecies$.
We conclude that the canonical functor of operads $\FreeOperad{\Sspecies}\to\FreeOperad{\Rspecies}$ is fully faithful.

%
%

\end{document}